\newtheorem{theorem}{Theorem}[section]
\newtheorem{corollary}[theorem]{Corollary}
\newtheorem{lemma}[theorem]{Lemma}
\theoremstyle{definition}
\newtheorem{definition}[theorem]{Definition}
\theoremstyle{remark}
\newtheorem{remark}[theorem]{Remark}
\newcommand{\eps}{\varepsilon}
\newcommand{\E}{\mathsf E}
\newcommand{\Prob}{\mathsf P}
\newcommand{\R}{\mathbb R}
\newcommand{\F}{\mathcal F}
\newcommand{\wt}{\widetilde}
\newcommand{\wh}{\widehat}
\DeclareMathOperator{\const}{const}
\newcommand*{\abs}[1]{\left\vert#1\right\vert}
\newcommand*{\norm}[1]{\left\Vert#1\right\Vert}
\newcommand*{\set}[1]{\left\{#1\right\}}
\begin{document}
\title{Smooth approximations for fractional and multifractional fields}
%\headlinetitle{If you need a shorter version for the running title.}

\author{Kostiantyn Ralchenko}
%\nameshortone{K.\,V.~Ralchenko}
\address{Department of Probability, Statistics and Actuarial Mathematics,
Mechanics and Mathematics Faculty,
Taras Shevchenko National University of Kyiv,
64, Volodymyrs'ka St.,
01601 Kyiv, Ukraine}
%\countryone{}
\email{k.ralchenko@gmail.com}

\author{Georgiy Shevchenko}
%\firstnametwo{Georgii M.}
%\nameshorttwo{G.\,M. Shevchenko}
%\address{Department of Probability, Statistics and Actuarial Mathematics,
%Mechanics and Mathematics Faculty,
%Taras Shevchenko National University of Kyiv,
%64, Volodymyrs'ka St.,
%01601 Kyiv}
%\countrytwo{Ukraine}
\email{zhora@univ.kiev.ua}

\begin{abstract}
We construct absolute continuous stochastic processes that converge to anisotropic fractional and multifractional Brownian sheets in Besov-type spaces.\end{abstract}

\keywords{Gaussian field, fractional Brownian sheet, multifractional Brownian sheet}

% MSC 2010
\subjclass[2010]{Primary 60G22; Secondary 60G15, 60G60, 60H05}

\thanks{Supported in part by the Commission of the European Communities grant PIRSES-GA-2008-230804 within the programme ``Marie Curie Actions''.}

\maketitle
\section{Introduction}
\emph{Fractional Brownian motion} (fBm) with Hurst parameter $H$ is a continuous centered Gaussian process with the covariance function
\[
\E B_tB_s=\frac12\left(t^{2H}+s^{2H}-\abs{t-s}^{2H}\right),
\quad t,s\ge0.
\]
It has stationary  increments that exhibit a property of long-range dependence for $H>1/2$, which makes fBm a popular and efficient model for long-range dependent processes in Internet traffic, stock markets, etc.

For numerous applications one needs a multi-parameter model for a long range dependence.
For most of those applications, including image processing,
geophysics, oceanography, etc,  two parameters are enough.

There are different possibilities to define a two-parameter fractional Brownian motion, or \emph{fractional Brownian sheet} (fBs).
One is so-called \emph{isotropic} fractional Brownian sheet with the covariance function
\[
\E B_tB_s=\frac12\left(\norm t^{2H}+\norm s^{2H}-\norm{t-s}^{2H}\right),
\quad t,s\in\R^2_+.
\]
As the name suggests, its properties are the same in all directions, which is not the case in many applications, especially in image processing.
For such applications a better model is \emph{anisotropic} fractional Brownian sheet with the covariance function
\[
\E B_tB_s=\frac14\prod\limits_{i=1,2}
    \left(t_i^{2H_i}+s_i^{2H_i}-\abs{t_i-s_i}^{2H_i}\right),
\quad t,s\in\R^2_+.
\]
It also has stationary increments in the sense that
the distribution of $(B_{t+s}-B_t)$ does not depend on $t$.

But the stationarity of increments of fractional Brownian process and sheet means that the behavior of them is the same
at each point, and this substantially restricts the area of their application.
In particular, they does not allow one to model situations,
where the regularity at a point depends on the point, as well as the range of dependence.
In view of this, recently several \emph{multifractional} generalizations of fBm and fBs were proposed in order to overcome these limitations such as moving average multifractional Brownian motion (mBm)~\cite{PeltierLevyVehel}, harmonisable mBm~\cite{BenassiJaffardRoux97}, Volterra-type mBm~\cite{mBm-en},
Different multiparameter extensions of mBm were studied in~\cite{AyacheLeger,Herbin,MeerschaertWuXiao}.

In this paper, we work in a general setting by considering a Gaussian random field $B(t)$ in the plane which is continuous almost surely and satisfies the following condition on its increments: for all $s,t\in[0,T_1]\times[0,T_2]$
\[
\E(B(s_1,s_2)-B(s_1,t_2)-B(t_1,s_2)+B(t_1,t_2))^2\le C(\abs{t_1-s_1}\abs{t_2-s_2})^\lambda,
\]
where $C>0$ and $\lambda>1$ are some constants. This, in particular, includes anisotropic fractional and multifractional
Brownian sheets.

Our main goal is to construct approximations for $B(t)$ in a certain Besov, or fractional Sobolev, spaces, by absolutely continuous fields.
This allows one to approximate stochastic integrals with respect to fractional Brownian sheet by usual integrals and consequently,
to approximate solutions of stochastic partial differential equations involving fractional noise by solutions of partial differential equations
with a random source, which in many aspects are similar to non-random partial differential equations.

The paper is organized as follows.
In Section~\ref{sec2} we give necessary definitions concerning Besov spaces. In Section~\ref{sec3} the main result, Theorem~\ref{th2.1}, about
the convergence of absolutely continuous approximations of the field $B(t)$ in Besov space $W^1$ in probability, is proved.
In Section~\ref{sec4} we study fractional and multifractional Brownian sheets. We show in particular that these fields satisfy the
conditions of Theorem~\ref{th2.1} and can be approximated by absolutely continuous random fields.

\section{Definitions and notation}
\label{sec2}
In this section, we define functional spaces which are similar to spaces of H\"older continuous functions.
They play an important role in definition and analysis of stochastic integrals with respect to fractional random fields (see e.\,g. \cite[Lemma 2.2.16]{Mishura08}).

Let $s,t\in\R^2_+$, $s=(s_1,s_2)$, $t=(t_1,t_2)$.
Write $s<t$ if $s_1<t_1$ and $s_2<t_2$.
For $s<t$ denote
$[s,t]=[s_1,t_1]\times[s_2,t_2]\subset\R^2_+$.
Let
$T=(T_1,T_2)\in(0,\infty)^2$,
$[0,T]=[0,T_1]\times[0,T_2]$.
For a function
$f\colon\R^2_+\to\R$
we consider two-parameter increments
\[
\Delta_sf(t):=f(t)-f(s_1,t_2)-f(t_1,s_2)+f(s), \quad s,t\in\R_+^2.
\]

Consider a function
$f\colon[0,T]\to\R$.
For
$\beta\in(0,1)^2$ and
$t\in[0,T]$
we denote
\begin{align*}
\varphi_1^{\beta_1}(f)(t)&=
    \int_0^{t_1}\frac{\abs{f(t)-f(s_1,t_2)}}{(t_1-s_1)^{\beta_1+1}}\,ds_1,\\
\varphi_2^{\beta_2}(f)(t)&=
    \int_0^{t_2}\frac{\abs{f(t)-f(t_1,s_2)}}{(t_2-s_2)^{\beta_2+1}}\,ds_2,\\
\varphi_3^{\beta_1\beta_2}(f)(t)&=
    \int_{[0,t]}\frac{\abs{\Delta_sf(t)}}{(t_1-s_1)^{\beta_1+1}(t_2-s_2)^{\beta_2+1}}\,ds,\\
\varphi_f^{\beta_1\beta_2}(t)&=
    \abs{f(t)}+\varphi_1^{\beta_1}(f)(t)+\varphi_2^{\beta_2}(f)(t)
    +\varphi_3^{\beta_1\beta_2}(f)(t).
\end{align*}
Let
$W_0^{\beta_1,\beta_2} = W_0^{\beta_1,\beta_2}([0,T])$
be a space of measurable functions
$f\colon[0,T]\to\R$,
with
\[
\norm{f}_{0,\beta_1,\beta_2}
=\sup_{t\in[0,T]}\varphi_f^{\beta_1\beta_2}(t)<\infty,
\]
Define also
$W_1^{\beta_1,\beta_2} = W_1^{\beta_1,\beta_2}([0,T])$
as a space of measurable functions
$f\colon[0,T]\to\R$,
with
\begin{align*}
\norm{f}_{1,\beta_1,\beta_2}
&=\sup_{0\le s<t\le T}
    \left(\frac{\abs{\Delta_sf(t)}}{(t_1-s_1)^{\beta_1}(t_2-s_2)^{\beta_2}}\right.\\
&\quad+\frac{1}{(t_2-s_2)^{\beta_2}}\int_{s_1}^{t_1}
    \frac{\abs{f_{t-}(u,s_2)-f_{t-}(s)}}{(u-s_1)^{1+\beta_1}}\,du\\
&\quad+\frac{1}{(t_1-s_1)^{\beta_1}}\int_{s_2}^{t_2}
    \frac{\abs{f_{t-}(s_1,v)-f_{t-}(s)}}{(v-s_2)^{1+\beta_2}}\,dv\\
&\quad\left.+\int_{[s,t]}\frac{\abs{\Delta_sf(r)}}
    {(r_1-s_1)^{1+\beta_1}(r_2-s_2)^{1+\beta_2}}\,dr\right)<\infty,
\end{align*}
where
$f_{t-}(s):=f(s)-f(s_1,t_2-)-f(t_1-,s_2)+f(t-)$.

\section{Main result}
\label{sec3}
Let $\set{B_t,t\in[0,T]}$ be a random field which satisfies the following conditions
\begin{enumerate}%[1)]
    \item $B_t$ is a Gaussian field;
    \item there exists constants $C>0$ and $\lambda>1$ such that for all $s,t\in[0,T]$
    \begin{equation}\label{0}
    \E(\Delta_sB_t)^2\le C(\abs{t_1-s_1}\abs{t_2-s_2})^\lambda;
    \end{equation}
    \item the trajectories of $B_t$ are continuous with probability one.
\end{enumerate}

One example of such field is anisotropic fractional Brownian sheet (see Introduction).

We consider the approximation
\[
B_t^\eps=\frac{1}{\eps^2}\int_{t_1}^{t_1+\eps}\int_{t_2}^{t_2+\eps}B_s\,ds
=\frac{1}{\eps^2}\int_{[0,\eps]^2}B_{s+t}\,ds.
\]

\begin{theorem}\label{th2.1}
For all
$\beta_1,\beta_2\in(0,\lambda/2)$
\[
\norm{B^\eps-B}_{1,\beta_1,\beta_2}\xrightarrow{\Prob}0, \quad \eps\to0+.
\]
\end{theorem}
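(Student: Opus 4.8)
Set $G^\eps:=B^\eps-B$. The plan is to show that $\E\norm{G^\eps}_{1,\beta_1,\beta_2}^p\to0$ as $\eps\to0+$ for a suitably large $p$, which yields the stated convergence in probability by Chebyshev's inequality. Since $B$ is continuous a.s., so is $B^\eps_t=\eps^{-2}\int_{[0,\eps]^2}B_{t+u}\,du$, hence so is $G^\eps$; therefore $f_{t-}(s)=\Delta_sf(t)$ for $f=G^\eps$, and each of the four terms in $\norm{G^\eps}_{1,\beta_1,\beta_2}$ is built solely from the rectangular increments $\Delta_aG^\eps_b$ over subrectangles $[a,b]$ of $[0,T]$ (a supremum in the first term, integrals against integrable kernels in the other three). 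Thus everything reduces to an estimate for $\Delta_sG^\eps_t$. (Throughout we may assume $B$ is defined with the bound \eqref{0} on $[0,T_1+1]\times[0,T_2+1]$ and that $\eps<1$.)

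The key estimate is that for each $\mu\in(0,\lambda)$ there is $C_\mu=C_\mu(C,\lambda,T)$ such that
\[
\E\bigl(\Delta_sG^\eps_t\bigr)^2\le C_\mu\,\eps^{\lambda-\mu}\bigl((t_1-s_1)(t_2-s_2)\bigr)^{\mu},\qquad 0\le s<t\le T.
\]
To prove it, note that $\Delta_sB^\eps_t=\eps^{-2}\int_{[0,\eps]^2}\Delta_{s+u}B_{t+u}\,du$, so by Minkowski's integral inequality $\norm{\Delta_sG^\eps_t}_{L^2}\le\eps^{-2}\int_{[0,\eps]^2}\norm{\Delta_{s+u}B_{t+u}-\Delta_sB_t}_{L^2}\,du$. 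I bound the integrand two ways. By \eqref{0} it is at most $2C^{1/2}\bigl((t_1-s_1)(t_2-s_2)\bigr)^{\lambda/2}$. On the other hand, splitting the shift coordinatewise,
\[
\Delta_{s+u}B_{t+u}-\Delta_sB_t=\bigl(\Delta_{s+u}B_{t+u}-\Delta_{(s_1+u_1,s_2)}B_{(t_1+u_1,t_2)}\bigr)+\bigl(\Delta_{(s_1+u_1,s_2)}B_{(t_1+u_1,t_2)}-\Delta_sB_t\bigr),
\]
each bracket is a difference of two rectangular increments of $B$ over rectangles differing by a shift of magnitude $u_i\le\eps$ in one coordinate; using the additivity of the mixed increment one writes such a difference as a combination of at most two rectangular increments over rectangles one side of which is $\le\eps$ (distinguishing the cases $u_i\le t_i-s_i$ and $u_i>t_i-s_i$), and \eqref{0} then gives the further bound $\const\cdot\eps^{\lambda/2}\bigl((t_1-s_1)^{\lambda/2}+(t_2-s_2)^{\lambda/2}\bigr)$ for the integrand. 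Squaring and taking the minimum of the two bounds, then interpolating via $\min\{X,Y\}\le X^{1-\theta}Y^\theta$ together with the subadditivity of $x\mapsto x^{1-\theta}$ and the boundedness of $t_i-s_i$ on $[0,T]$, one arrives at the claimed estimate with $\mu=\lambda(1-\theta)$, $\theta\in(0,1)$. Finally $\Delta_sG^\eps_t$ is centered Gaussian ($B^\eps$ being jointly Gaussian with $B$), so $\E\abs{\Delta_sG^\eps_t}^p\le c_p\bigl(\E(\Delta_sG^\eps_t)^2\bigr)^{p/2}\le c_pC_\mu^{p/2}\eps^{p(\lambda-\mu)/2}\bigl((t_1-s_1)(t_2-s_2)\bigr)^{p\mu/2}$ for every $p\ge1$.

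Now fix $\mu\in(2\max(\beta_1,\beta_2),\lambda)$ — possible since $\beta_i<\lambda/2$ — then $\gamma_i\in(\beta_i,\mu/2)$, and take $p$ so large that the two-parameter Garsia--Rodemich--Rumsey inequality supplies the deterministic embedding $\norm{f}_{1,\beta_1,\beta_2}\le K\,N_p(f)$, where $N_p(f)^p:=\int_{\{0\le s<t\le T\}}\abs{\Delta_sf(t)}^p\,(t_1-s_1)^{-1-p\gamma_1}(t_2-s_2)^{-1-p\gamma_2}\,ds\,dt$ and $K=K(p,\gamma,\beta,T)$ (this is the two-parameter analogue of \cite[Lemma~2.2.16]{Mishura08}, and it is exactly here that $\gamma_i>\beta_i$ and the largeness of $p$ enter). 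By Fubini's theorem and the moment bound just obtained,
\[
\E\norm{G^\eps}_{1,\beta_1,\beta_2}^p\le K^p\,\E N_p(G^\eps)^p\le K^p c_p C_\mu^{p/2}\,\eps^{p(\lambda-\mu)/2}\prod_{i=1,2}\ \int_{0\le\sigma<\tau\le T_i}(\tau-\sigma)^{\,p\mu/2-1-p\gamma_i}\,d\sigma\,d\tau,
\]
and the two integrals converge precisely because $\gamma_i<\mu/2$. Since $\mu<\lambda$, the right-hand side equals $\const\cdot\eps^{p(\lambda-\mu)/2}\to0$, so $\norm{G^\eps}_{1,\beta_1,\beta_2}\to0$ in $L^p$, hence in probability.

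The main obstacle is the second-moment estimate, and within it the ``shift'' bound for $\norm{\Delta_{s+u}B_{t+u}-\Delta_sB_t}_{L^2}$: one must decompose a difference of two shifted rectangular increments into increments over thin rectangles having a side of order $\eps$, treating separately the cases where the shift $u_i$ is below or above the side $t_i-s_i$, and then combine this with the trivial bound by interpolation so that the resulting estimate has exactly the single clean form $\eps^{\lambda-\mu}\bigl((t_1-s_1)(t_2-s_2)\bigr)^{\mu}$ — a genuine product in $t_1-s_1$ and $t_2-s_2$ is what makes the $N_p$-integrals converge, while the factor $\eps^{\lambda-\mu}$ must survive intact. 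By comparison, the passage from moment bounds to convergence of the $W_1^{\beta_1,\beta_2}$-norm via Garsia--Rodemich--Rumsey is routine.
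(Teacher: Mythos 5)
Your proposal is correct and follows essentially the same route as the paper's proof: the same two second-moment bounds for $\Delta_s(B^\eps_t-B_t)$ (the trivial one from \eqref{0} and the ``shift'' one with a factor $\eps^{\lambda/2}$), merged into a single estimate of the form $\eps^{\lambda-\mu}\bigl((t_1-s_1)(t_2-s_2)\bigr)^{\mu}$ with $\mu>2\max\{\beta_1,\beta_2\}$, followed by Gaussian moment bounds, the two-parameter Garsia--Rodemich--Rumsey inequality, and control of all four terms of $\norm{\cdot}_{1,\beta_1,\beta_2}$. The only differences are cosmetic: you use Minkowski's integral inequality and the interpolation $\min\{X,Y\}\le X^{1-\theta}Y^{\theta}$ where the paper uses Cauchy--Schwarz and a case analysis on $\abs{t_i-s_i}$ versus $\eps$, and you conclude via $L^p$ convergence where the paper runs an equivalent high-probability-event argument.
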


\begin{proof}
Denote
\[
\Delta B_t^\eps:=B_t^\eps-B_t
=\frac{1}{\eps^2}\int_{[0,\eps]^2}\left(B_{u+t}-B_t\right)\,du.
\]
Then
\begin{multline*}
\Delta_s(\Delta B_t^\eps)
=\Delta B_t^\eps-\Delta B_{s_1,t_2}^\eps
    -\Delta B_{t_1,s_2}^\eps+\Delta B_s^\eps
=\frac{1}{\eps^2}\int_{[0,\eps]^2}\left(B_{u+t}-B_t\right.\\
\left.-B_{u_1+s_1,u_2+t_2}+B_{s_1,t_2}-B_{u_1+t_1,u_2+s_2}+B_{t_1,s_2}+B_{u+s}-B_s\right)\,du.
\end{multline*}
According to the Cauchy--Schwarz inequality,
\begin{multline}\label{1}
\E(\Delta_s(\Delta B_t^\eps))^2
\le\frac{1}{\eps^2}\int_{[0,\eps]^2}\E\left(B_{u+t}-B_t-B_{u_1+s_1,u_2+t_2}+B_{s_1,t_2}\right.\\ \left.-B_{u_1+t_1,u_2+s_2}+B_{t_1,s_2}+B_{u+s}-B_s\right)^2\,du.
\end{multline}
Considering~\eqref{0}, we obtain
\begin{equation}\label{2}
\begin{split}
&\E(\Delta_s(\Delta B_t^\eps))^2
\le\frac{2}{\eps^2}\int_{[0,\eps]^2}\left(\E(\Delta_{u+s}B_{u+t})^2+\E(\Delta_sB_t)^2\right)\,du\\
&\quad\le\frac{2}{\eps^2}\int_{[0,\eps]^2}2C(\abs{t_1-s_1}\abs{t_2-s_2})^\lambda\,du
=4C(\abs{t_1-s_1}\abs{t_2-s_2})^\lambda.
\end{split}
\end{equation}
On the other hand, \eqref{1} implies
\begin{equation}\label{3}
\begin{split}
\E(\Delta_s(\Delta B_t^\eps))^2
&\le\frac{4}{\eps^2}\int_{[0,\eps]^2}\left(\E(\Delta_{u_1+s_1,t_2}B_{u+t})^2
+\E(\Delta_{t_1,s_2}B_{u_1+t_1,t_2})^2\right.\\
&\quad\left.+\E(\Delta_{u_1+s_1,s_2}B_{u_1+t_1,u_2+s_2})^2
+\E(\Delta_sB_{u_1+s_1,t_2})^2\right)\,du\\
&\le\frac{8C}{\eps^2}\int_{[0,\eps]^2}\left(\abs{t_1-s_1}^\lambda u_2^\lambda+u_1^\lambda\abs{t_2-s_2}^\lambda\right)\,du\\
&=\frac{8C}{\lambda+1}\left(\abs{t_1-s_1}^\lambda+\abs{t_2-s_2}^\lambda\right)\eps^\lambda,
\end{split}
\end{equation}
because, in view of~\eqref{0},
\begin{gather*}
\E(\Delta_{u_1+s_1,t_2}B_{u+t})^2\le C\abs{t_1-s_1}^\lambda u_2^\lambda,\\
\E(\Delta_{t_1,s_2}B_{u_1+t_1,t_2})^2\le Cu_1^\lambda\abs{t_2-s_2}^\lambda,\\
\E(\Delta_{u_1+s_1,s_2}B_{u_1+t_1,u_2+s_2})^2\le C\abs{t_1-s_1}^\lambda u_2^\lambda,\\
\E(\Delta_sB_{u_1+s_1,t_2})^2\le Cu_1^\lambda\abs{t_2-s_2}^\lambda.
\end{gather*}

Let
$\delta\in(0,\lambda-2\max\set{\beta_1,\beta_2})$.
We study three cases.

\emph{Case 1:}
$\abs{t_1-s_1}<\eps$.
Based on the estimate~\eqref{2}, we get
\[
\E(\Delta_s(\Delta B_t^\eps))^2
\le4C(\abs{t_1-s_1}\abs{t_2-s_2})^\lambda
<4CT_2^\delta(\abs{t_1-s_1}\abs{t_2-s_2})^{\lambda-\delta}\eps^{\delta}.
\]

\emph{Case 2:}
$\abs{t_1-s_1}\ge\eps$, $\abs{t_2-s_2}<\eps$.
Similarly to the Case~1, we obtain
\[
\E(\Delta_s(\Delta B_t^\eps))^2
\le4C(\abs{t_1-s_1}\abs{t_2-s_2})^\lambda
<4CT_1^\delta(\abs{t_1-s_1}\abs{t_2-s_2})^{\lambda-\delta}\eps^{\delta}.
\]

\emph{Case 3:}
$\abs{t_1-s_1}\ge\eps$, $\abs{t_2-s_2}\ge\eps$.
Based on the estimate~\eqref{3}, we get
\begin{align*}
\E(\Delta_s(\Delta B_t^\eps))^2
&\le\frac{8C}{\lambda+1}\left(\abs{t_1-s_1}^\lambda+\abs{t_2-s_2}^\lambda\right)\eps^\lambda\\
&<4C\left(T_1^\delta+T_2^\delta\right)(\abs{t_1-s_1}\abs{t_2-s_2})^{\lambda-\delta}\eps^{\delta}.
\end{align*}

Thus, in all 3 cases we have
\[
\E(\Delta_s(\Delta B_t^\eps))^2
<4C\left(T_1^\delta+T_2^\delta\right)(\abs{t_1-s_1}\abs{t_2-s_2})^{\lambda-\delta}\eps^{\delta}.
\]
Since
$\Delta_s(\Delta B_t^\eps)$
has a normal distribution, then for $p>0$ we have
\begin{equation}\label{4}
\E\abs{\Delta_s(\Delta B_t^\eps)}^p
\le C_1(\abs{t_1-s_1}\abs{t_2-s_2})^{(\lambda-\delta)p/2}\eps^{\delta p/2},
\end{equation}
where
$C_1=C_1(p,T_1,T_2)=\frac{2^{3p/2}C^{1/2}(T_1^\delta+T_2^\delta)^{1/2}\Gamma(\frac{p+1}{2})}{\Gamma(\frac12)}$.

According to the two-parameter Garsia--Rodemich--Rumsey inequality (\cite[Theorem 2.1]{GRRinequality}), for all
$p>0$, $\alpha_1>p^{-1}$, $\alpha_2>p^{-1}$
there exists a constant
$C_2=C_2(\alpha_1,\alpha_2,p)>0$
such that
\begin{equation}\label{5}
\abs{\Delta_s(\Delta B_t^\eps)}^p
\le C_2\abs{t_1-s_1}^{\alpha_1p-1}
\abs{t_2-s_2}^{\alpha_2p-1}\xi,
\end{equation}
where
\[
\xi=\int_{[0,T]^2}\frac{\abs{\Delta_x(\Delta B_y^\eps)}^p}%
{\abs{x_1-y_1}^{\alpha_1p+1}\abs{x_2-y_2}^{\alpha_2p+1}}\,dx\,dy.
\]

We choose
$0<\theta<(\lambda-2\max\set{\beta_1,\beta_2}-\delta)/2$,
$p=\frac2\theta$,
$\alpha_1=\alpha_2=\frac{\lambda-\theta-\delta}{2}$.
Then, taking into account~\eqref{4}, we obtain
\begin{align*}
\E\xi&=\int_{[0,T]^2}\frac{\E\abs{\Delta_x(\Delta B_y^\eps)}^p}{\abs{x_1-y_1}^{\alpha_1p+1}\abs{x_2-y_2}^{\alpha_2p+1}}\,dx\,dy\\
&\le C_1\eps^{\delta p/2}\int_{[0,T]^2}\abs{x_1-y_1}^{\frac{\lambda-\delta}{2}p-(\alpha_1p+1)}
\abs{x_2-y_2}^{\frac{\lambda-\delta}{2}p-(\alpha_2p+1)}\,dx\,dy\\
&=C_1\eps^{\delta p/2}\int_{[0,T]^2}dx\,dy
=C_1T_1^2T_2^2\eps^{\delta p/2}.
\end{align*}
Therefore, from~\eqref{5} we get
\begin{equation}\label{6}
\E\sup_{s,t\in[0,T]}\frac{\abs{\Delta_s(\Delta B_t^\eps)}^p}%
{(\abs{t_1-s_1}\abs{t_2-s_2})^{p(\lambda-2\theta-\delta)/2}}
\le C_3\eps^{\delta p/2},
\end{equation}
where $C_3=C_1C_2T_1^2T_2^2$.

The last estimate implies that for any
$\kappa\in(0,1)$
there exists
$c_\kappa$
such that probability of the event
\[
A_{\eps}:=\set{\text{for all $s,t\in[0,T]$}: \abs{\Delta_s(\Delta B_t^\eps)}\le
c_\kappa(\abs{t_1-s_1}\abs{t_2-s_2})^{(\lambda-2\theta-\delta)/2}\eps^{\delta/2}}
\]
is not less than $1-\kappa$.

As we have chosen
$0<\delta<\lambda-2\max\set{\beta_1,\beta_2}$ and
$0<\theta<\frac{\lambda-\delta}{2}-\max\set{\beta_1,\beta_2}$,
then
$h_1:=\frac{\lambda-\delta}{2}-\beta_1-\theta>0$,
$h_2:=\frac{\lambda-\delta}{2}-\beta_2-\theta>0$.

By the definition of the norm
$\norm{\cdot}_{1,\beta_1,\beta_2}$,
we have
\begin{align*}
&\norm{\Delta B^\eps}_{1,\beta_1,\beta_2}
=\sup_{0\le s<t\le T}
    \left(\frac{\abs{\Delta_s(\Delta B_t^\eps)}}{(t_1-s_1)^{\beta_1}(t_2-s_2)^{\beta_2}}\right.\\
&\quad+\frac{1}{(t_2-s_2)^{\beta_2}}\int_{s_1}^{t_1}
    \frac{\abs{\Delta_s(\Delta B_{u,t_2}^\eps)}}{(u-s_1)^{1+\beta_1}}\,du
+\frac{1}{(t_1-s_1)^{\beta_1}}\int_{s_2}^{t_2}
    \frac{\abs{\Delta_s(\Delta B_{t_1,v}^\eps)}}{(v-s_2)^{1+\beta_2}}\,dv\\
&\quad\left.+\int_{[s,t]}\frac{\abs{\Delta_s(\Delta B_r^\eps)}}
    {(r_1-s_1)^{1+\beta_1}(r_2-s_2)^{1+\beta_2}}\,dr\right).
\end{align*}

Therefore at the set $A_\eps$
\begin{align*}
&\norm{\Delta B^\eps}_{1,\beta_1,\beta_2}
\le\sup_{0\le s<t\le T}
    \left(c_\kappa\eps^{\delta/2}(t_1-s_1)^{h_1}(t_2-s_2)^{h_2}
    \vphantom{\int_{[s,t]}}\right.\\
&\quad+c_\kappa\eps^{\delta/2}(t_2-s_2)^{h_2}\int_{s_1}^{t_1}(u-s_1)^{h_1-1}\,du\\
&\quad+c_\kappa\eps^{\delta/2}(t_1-s_1)^{h_1}\int_{s_2}^{t_2}(v-s_2)^{h_2-1}\,dv\\
&\quad\left.+c_\kappa\eps^{\delta/2}\int_{[s,t]}(r_1-s_1)^{h_1-1}(r_2-s_2)^{h_2-1}\,dr\right)\\
&=\sup_{0\le s<t\le T}\left(c_\kappa\eps^{\delta/2}(t_1-s_1)^{h_1}(t_2-s_2)^{h_2}
\left(1+\frac{1}{h_1}+\frac{1}{h_2}+\frac{1}{h_1h_2}\right)\right)\\
&\le c_\kappa\eps^{\delta/2} T_1^{h_1}T_2^{h_2}
\left(1+\frac{1}{h_1}+\frac{1}{h_2}+\frac{1}{h_1h_2}\right)\to0,
\quad\eps\to0+.
\end{align*}

Then for any $a>0$
\[
\varlimsup_{\eps\to0+}\Prob\left(\norm{\Delta B^\eps}_{1,\beta_1,\beta_2}\ge a\right)\le \kappa,
\]
because for sufficiently small $\eps$ one has
$c_\kappa\eps^{\delta/2} T_1^{h_1}T_2^{h_2}(1+\frac{1}{h_1}+\frac{1}{h_2}+\frac{1}{h_1h_2})<a$.

Hence, when
$\kappa\to0+$
we have
\[
\lim_{\eps\to0+}\Prob\left(\norm{\Delta B^\eps}_{1,\beta_1,\beta_2}\ge a\right)=0.\qedhere
\]
\end{proof}

\begin{remark}
The convergence in probability in the last theorem may be not sufficient for some applications. For example, one may want to use this theorem to get an approximate solution to stochastic PDE with a fractional noise by solving a usual PDE with random force. She is not able of course, to solve for all $\omega$'s and takes some fixed $\omega$. She knows, of course, that there is a subsequence of solutions converging almost surely, but apriori it is not known which subsequence is it. So another subsequence (depending, say, on $\omega$) may converge to something different or there might be several such subsequences. To overcome this problem, we give below a proof that for $\eps_n=2^{-n}$ one has an almost sure convergence, and give moreover an estimate for the rate of convergence.
\end{remark}
Let
\begin{align*}
a(\eps)&=\sup_{s,t\in[0,T]}\frac{\abs{\Delta_s(\Delta B_t^\eps)}^p}{(\abs{t_1-s_1}\abs{t_2-s_2})^{p(\lambda-2\theta-\delta)/2}},\\
b(\eps)&=\eps^{\delta p/2}\ln^{1+\gamma}\eps,\quad\gamma>0.
\end{align*}
\eqref{6} implies that
\[
\E a(\eps)\le C_3\eps^{\delta p/2}.
\]
Then
\[
\E\left[\sum_{n\ge1}\frac{a(\eps_n)}{b(\eps_n)}\right]
=\E\left[\sum_{n\ge1}\frac{a(2^{-n})}{b(2^{-n})}\right]
\le C\sum_{n\ge1}\frac1{n^{1+\gamma}}\to0,
\quad n\to\infty.
\]
Therefore,
\[
\frac{a(\eps_n)}{b(\eps_n)}\to0,
\quad n\to\infty,\quad\text{a.\,s.}
\]
Hence,
\[
a(\eps_n)\le C(\omega)b(\eps_n)\quad\text{a.\,s.}
\]
So for all $s,t\in[0,T]$ and $n\ge1$
\[
\abs{\Delta_s(\Delta B_t^{\eps_n})}
\le C^{1/p}(\omega)\eps_n^{\delta/2}\ln^{(1+\gamma)/p}\eps_n
(\abs{t_1-s_1}\abs{t_2-s_2})^{(\lambda-2\theta-\delta)/2}
\quad\text{a.\,s.}
\]
Using the definition of the norm
$\norm{\cdot}_{1,\beta_1,\beta_2}$,
we obtain
\begin{multline*}
\norm{\Delta B^\eps_n}_{1,\beta_1,\beta_2}
\le C^{1/p}(\omega)T_1^{h_1}T_2^{h_2}
\left(1+\frac{1}{h_1}+\frac{1}{h_2}+\frac{1}{h_1h_2}\right)\\
\times\eps_n^{\delta/2}\ln^{(1+\gamma)/p}\eps_n\to0,
\quad n\to\infty,\quad\text{a.\,s.}
\end{multline*}

\section{Examples}
\label{sec4}
\subsection{Fractional Brownian sheet}
Fractional Brownian fields in the plane can be defined in various ways.
We consider the so-called anisotropic random fields that possess the fractional Brownian property coordinate-wise
(see e.\,g. \cite[Section~1.20]{Mishura08}).
\begin{definition}
A random field $\set{B_t^H, t\in[0,T]}$ is called a
\emph{fractional Brownian sheet} with Hurst index
$H=(H_1,H_2)\in(0,1)^2$
if
\begin{enumerate}%[1)]
    \item $B_t^H$ is a Gaussian field such that $B_t^H=0$,
    $t\in\partial\R_+^2$;
    \item $\E B_t^H=0$, $\E B_t^HB_s^H=\frac14\prod\limits_{i=1,2}
    \left(t_i^{2H_i}+s_i^{2H_i}-\abs{t_i-s_i}^{2H_i}\right)$.
\end{enumerate}
\end{definition}
This field has a continuous modification.
Its increments satisfy the equality
\[
\E\left(\Delta_sB_t^H\right)^2=\abs{t_1-s_1}^{2H_1}\abs{t_2-s_2}^{2H_2}.
\]
Hence, for $B_t^H$ the inequality~\eqref{0} holds with~$\lambda=2\min\{H_1,H_2\}$.
Therefore, according to Theorem~\ref{th2.1}, for $H_i\in(\frac12,1)$ for all $\beta_1,\beta_2\in(0,H_1\wedge H_2)$ one has a convergence of approximations
\begin{gather*}
\norm{B^{H,\eps}-B^H}_{1,\beta_1,\beta_2}\xrightarrow{\Prob}0, \quad \eps\to0+,
\intertext{where}
B_t^{H,\eps}=\frac{1}{\eps^2}\int_{t_1}^{t_1+\eps}\!\!\int_{t_2}^{t_2+\eps}B_s^H\,ds.
\end{gather*}

\subsection{Multifractional Brownian sheet}
We consider a function
\[
H(t)=(H_1(t),H_2(t))\colon[0,T]\to(1/2,1)^2.
\]
Let $\mu$, $\nu$ be constants such that
\[
\frac12<\mu<\min_{t\in[0,T]}H_i(t)\le\max_{t\in[0,T]}H_i(t)<\nu<1.
\]
Assume that there exist positive constants $c_1$, $c_2$ such that for all $t,s\in [0,T]$
\begin{enumerate}
\renewcommand\theenumi{\arabic{enumi}}
\renewcommand\labelenumi{(H\theenumi)}
\item\label{H1}
    $\abs{H_i(t)-H_i(s)}\le c_1\left(\abs{t_1-s_1}^\nu+\abs{t_2-s_2}^{\nu}\right)$,
\item\label{H2}
    $\abs{\Delta_sH_i(t)}\le c_2\left(\abs{t_1-s_1}\abs{t_2-s_2}\right)^{\nu}$.
\end{enumerate}
\begin{definition}
Multifractional Brownian sheet
$\set{B_t^{H(t)},t\in[0,T]}$
with Hurst function $H(t)$ is defined as
\[
B_t^{H(t)}:=\int_{\R^2}\prod_{i=1,2}\left[(t_i-u_i)_+^{H_i(t)-1/2}-(-u_i)_+^{H_i(t)-1/2}\right]dW_u,
\]
$t\in[0,T]$,
where $s_+=\max\set{s,0}$,
$W=\set{W_s,s\in\R^2}$
is a standard Wiener field.
\end{definition}
Denote $Y_t=B_t^{H(t)}$.
\begin{theorem}\label{th3.1}
The trajectories of $B_t^{H(t)}$ are continuous with probability one.
\end{theorem}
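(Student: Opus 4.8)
The plan is to verify the hypotheses of the Kolmogorov--Chentsov continuity theorem for the Gaussian field $Y_t=B_t^{H(t)}$, $t\in[0,T]$. Since $Y$ is Gaussian it suffices to produce a uniform second-moment bound
\[
\E(Y_t-Y_s)^2\le C\bigl(\abs{t_1-s_1}^{2\mu}+\abs{t_2-s_2}^{2\mu}\bigr)\le 2C\abs{t-s}^{2\mu},\qquad s,t\in[0,T],
\]
because Gaussianity then yields $\E\abs{Y_t-Y_s}^{2k}\le C_k\abs{t-s}^{2k\mu}$ for every $k$, and since $\mu>1/2$ one may take $k=2$ to get the exponent $4\mu>2$; the criterion then provides a modification with almost surely continuous trajectories, with which $B^{H(t)}$ is identified, as claimed.

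To get the $L^2$-bound I would split
\[
Y_t-Y_s=\bigl(B_t^{H(t)}-B_t^{H(s)}\bigr)+\bigl(B_t^{H(s)}-B_s^{H(s)}\bigr)
\]
and estimate the two terms separately. The second term is an increment of an \emph{ordinary} fractional Brownian sheet with the \emph{fixed} Hurst index $h:=H(s)\in[\mu,\nu]^2$. From the covariance in the definition one computes $\E(B_{t_1,t_2}^h-B_{s_1,t_2}^h)^2=t_2^{2h_2}\abs{t_1-s_1}^{2h_1}$ and, symmetrically, $\E(B_{s_1,t_2}^h-B_{s_1,s_2}^h)^2=s_1^{2h_1}\abs{t_2-s_2}^{2h_2}$; writing $B_t^h-B_s^h$ as the sum of these two increments and using $\mu\le h_i\le\nu$ together with the boundedness of $[0,T]$ gives a bound $C(\abs{t_1-s_1}^{2\mu}+\abs{t_2-s_2}^{2\mu})$ with $C=C(T,\mu,\nu)$.

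For the first term, factor the integrand as $\prod_{i=1,2}\psi_{t_i}(u_i;H_i(t))$ with $\psi_a(v;\eta):=(a-v)_+^{\eta-1/2}-(-v)_+^{\eta-1/2}$, and apply the isometry of the Wiener integral:
\[
\E\bigl(B_t^{H(t)}-B_t^{H(s)}\bigr)^2=\Bigl\|\prod_i\psi_{t_i}(\cdot;H_i(t))-\prod_i\psi_{t_i}(\cdot;H_i(s))\Bigr\|_{L^2(\R^2)}^2 .
\]
Telescoping the product and using that $\|\psi_a(\cdot;\eta)\|_{L^2(\R)}^2$ is finite and bounded uniformly over $a\in[0,T_i]$, $\eta\in[\mu,\nu]$, this is dominated by $C\sum_i\|\psi_{t_i}(\cdot;H_i(t))-\psi_{t_i}(\cdot;H_i(s))\|_{L^2(\R)}^2$. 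The one-dimensional lemma one then needs is
\[
\|\psi_a(\cdot;\eta)-\psi_a(\cdot;\eta')\|_{L^2(\R)}^2\le C\abs{\eta-\eta'}^2,\qquad a\in[0,T_i],\ \eta,\eta'\in[\mu,\nu];
\]
granting it and invoking assumption (H\ref{H1}), the first term is at most $C\sum_i\abs{H_i(t)-H_i(s)}^2\le C(\abs{t_1-s_1}^{2\nu}+\abs{t_2-s_2}^{2\nu})$. Adding the two contributions (recall $\mu<\nu$ and $[0,T]$ is bounded) gives the desired estimate. Note that only (H\ref{H1}) enters here; (H\ref{H2}) is for later use.

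The main obstacle is that one-dimensional kernel estimate, uniform in $a\in[0,T_i]$. I would prove it via the mean value theorem in $\eta$, differentiating under the integral sign, $\partial_\eta\psi_a(v;\eta)=(a-v)_+^{\eta-1/2}\ln(a-v)_+-(-v)_+^{\eta-1/2}\ln(-v)_+$, and showing that $\sup_{\eta\in[\mu,\nu]}\|\partial_\eta\psi_a(\cdot;\eta)\|_{L^2(\R)}$ stays bounded as $a$ ranges over $[0,T_i]$. The two delicate ranges are $v\to-\infty$, where the cancellation between the two logarithmically weighted power terms must be exploited (the squared integrand then behaves like $a^2\abs{v}^{2\eta-3}\ln^2\abs{v}$, integrable since $\eta<1$), and $a\to0$, where a change of variables reduces the relevant integral to $\int_0^a w^{2\eta-1}\ln^2w\,dw\to0$, so there is no blow-up. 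This is a routine if somewhat technical computation; everything else is bookkeeping. Should one only obtain the weaker bound $C\abs{\eta-\eta'}^2\ln^2(1/\abs{\eta-\eta'})$, the argument still goes through, since after absorbing the logarithm its exponent remains larger than $1$.
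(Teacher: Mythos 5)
Your proposal is correct and its top-level skeleton coincides with the paper's: establish $\E(Y_t-Y_s)^2\le C\norm{t-s}^{2\mu}$, upgrade to arbitrary moments by Gaussianity, and invoke the Kolmogorov--Chentsov criterion with a moment exponent $2k\mu>2$. The difference is in how the $L^2$ increment bound is obtained. The paper does not prove it at all: it cites Lemma~2.2 of \cite{MeerschaertWuXiao}, which is why its proof is phrased on small rectangles $[a,b]\subset(0,T]$ of diameter less than some $\delta$ (the form required by that lemma) and then patched together. You instead derive the bound directly, splitting $Y_t-Y_s$ into a fixed-index fractional-sheet increment (controlled by the explicit covariance) plus a variation in the Hurst parameter (controlled by an $L^2$-Lipschitz estimate of the kernel in $H$ together with condition~(H\ref{H1})). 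The one-dimensional kernel estimates you flag as the "main obstacle" are exactly what the paper proves anyway in its appendix for Theorem~\ref{th3.2} — your $\norm{\psi_a(\cdot;\eta)-\psi_a(\cdot;\eta')}_{L^2}^2\le C\abs{\eta-\eta'}^2$ is Lemma~\ref{lem_fbm}(ii), and the uniform bound on $\norm{\partial_\eta\psi_a(\cdot;\eta)}_{L^2}$ is the content of Lemma~\ref{lem_int} (whose proof in fact yields a bound of order $t^{2\mu}$, uniform in $a$ and $\eta$). So your route buys self-containedness — it removes the external citation at essentially no extra cost, since the technical lemmas are already needed later — and it works uniformly on all of $[0,T]$ including the axes, avoiding the paper's restriction to rectangles bounded away from the boundary. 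One small point of hygiene: when you apply the mean value theorem in $\eta$ under the integral, the intermediate point depends on $v$, so you should either pass through $\psi_a(v;\eta)-\psi_a(v;\eta')=\int_{\eta'}^{\eta}\partial_\zeta\psi_a(v;\zeta)\,d\zeta$ and use Minkowski's integral inequality, or take the supremum over $\zeta\in[\mu,\nu]$ inside; this is routine and does not affect the conclusion.
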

\begin{proof}
We prove that trajectories of $Y_t=B_t^{H(t)}$ are continuous with probability one on any rectangle $[a,b]\subset(0,T]$ such that $\norm{b-a}<\delta$.
Moreover, according to~\cite[Lemma~2.2]{MeerschaertWuXiao}, the constant $\delta>0$ can be chosen such small that for all $s,t\in[a,b]$
the following inequality holds
\[
\E\left(Y_t-Y_s\right)^2
\le C_1\sum_{l=1}^2\abs{t_l-s_l}^{2\mu}
\le 2C_1\norm{t-s}^{2\mu}.
\]
As $Y_t$ is a Gaussian field, then for any $\alpha>0$ there exists $C_2>0$ such that
\[
\E\left(Y_t-Y_s\right)^\alpha
\le C_2\norm{t-s}^{\alpha\mu}.
\]
Taking $\alpha>2/\mu$, we obtain that the field $Y_t$ is continuous on $[a,b]$ with probability one according to Kolmogorov theorem.
\end{proof}

\begin{theorem}\label{th3.2}
There exists a constant $C>0$ such that for all
$s,t\in[0,T]$
\begin{equation}\label{3.1}
\E(\Delta_sY_t)^2\le C(\abs{t_1-s_1}\abs{t_2-s_2})^{2\mu}
\end{equation}
\end{theorem}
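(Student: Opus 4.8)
The plan is to combine the moving-average representation of $Y_t=B^{H(t)}_t$ with the Wiener--It\^o isometry. Put $g(x,h;v):=(x-v)_+^{h-1/2}-(-v)_+^{h-1/2}$ and, for $\tau\in[0,T]$, $G_i(\tau):=g(\tau_i,H_i(\tau);\cdot)\in L^2(\R)$, so that $Y_\tau=\int_{\R^2}G_1(\tau)(u_1)\,G_2(\tau)(u_2)\,dW_u$. Since the stochastic integral is linear and the integrand at each of the four corners of $[s,t]$ is square-integrable,
\[
\E(\Delta_sY_t)^2=\norm{\Delta_s\bigl(G_1G_2\bigr)(t)}_{L^2(\R^2)}^2,
\]
where $\Delta_s(\cdot)(t)$ is the rectangular increment over $[s,t]$ (as in Section~\ref{sec2}), acting on the location argument, and $(G_1G_2)(\tau)$ denotes the function $u\mapsto G_1(\tau)(u_1)G_2(\tau)(u_2)$. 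Two elementary facts about $g$, both obtained by the substitution $v\mapsto v+s_1$ followed by the rescaling $v\mapsto\abs{t_1-s_1}v$, will be used repeatedly: (i) $\norm{g(t_1,h;\cdot)-g(s_1,h;\cdot)}_{L^2(\R)}^2=c_h\abs{t_1-s_1}^{2h}$, where $c_h=\norm{g(1,h;\cdot)}_{L^2}^2$ is continuous on $(1/2,1)$ and hence bounded on $[\mu,\nu]$; and (ii) $h\mapsto g(x,h;\cdot)$ is $C^2$ from $(1/2,1)$ to $L^2(\R)$ with $\norm{\partial_h^kg(x,h;\cdot)}_{L^2}$ bounded uniformly over $x\in[0,T_i]$, $h\in[\mu,\nu]$, $k\in\set{1,2}$ (the tail of $\partial_h^kg$ behaves like $\abs v^{h-3/2}\log^k\abs v$, which is square-integrable since $\nu<1$).

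The key algebraic step is the Leibniz identity for rectangular increments,
\begin{multline*}
\Delta_s\bigl(G_1G_2\bigr)(t)=G_1(t)\,\Delta_sG_2(t)+G_2(s)\,\Delta_sG_1(t)\\
{}+\bigl(G_1(t)-G_1(s_1,t_2)\bigr)\bigl(G_2(s_1,t_2)-G_2(s)\bigr)\\
{}+\bigl(G_1(t)-G_1(t_1,s_2)\bigr)\bigl(G_2(t_1,s_2)-G_2(s)\bigr),
\end{multline*}
which, via $(a+b+c+d)^2\le4(a^2+b^2+c^2+d^2)$ and $\norm{A(u_1)B(u_2)}_{L^2(\R^2)}=\norm{A}_{L^2(\R)}\norm{B}_{L^2(\R)}$, reduces the estimate to four products of one-parameter $L^2$-norms. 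By (i), $\norm{G_1(t)}_{L^2}^2=c_{H_1(t)}t_1^{2H_1(t)}\le\const$ and $\norm{G_2(s)}_{L^2}^2\le\const$. For the cross differences I would split off the Hurst variation, e.g.
\[
G_1(t)-G_1(s_1,t_2)=\bigl(g(t_1,H_1(t);\cdot)-g(s_1,H_1(t);\cdot)\bigr)+\bigl(g(s_1,H_1(t);\cdot)-g(s_1,H_1(s_1,t_2);\cdot)\bigr),
\]
bounding the first part by (i) and the second by (ii) together with (H\ref{H1}); since $\min_\tau H_i(\tau)>\mu$ and $\nu>\mu$, this yields $\norm{G_1(t)-G_1(s_1,t_2)}_{L^2}^2\le C\abs{t_1-s_1}^{2\mu}$, $\norm{G_2(s_1,t_2)-G_2(s)}_{L^2}^2\le C\abs{t_2-s_2}^{2\mu}$, and analogously for the other cross product. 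Hence the two cross terms are already $\le C(\abs{t_1-s_1}\abs{t_2-s_2})^{2\mu}$, and, using also $\norm{G_1(t)}_{L^2}^2\le\const$, $\norm{G_2(s)}_{L^2}^2\le\const$, everything reduces to the single estimate $\norm{\Delta_sG_1(t)}_{L^2(\R)}^2\le C(\abs{t_1-s_1}\abs{t_2-s_2})^{2\mu}$ and its mirror image for $G_2$.

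To prove this, expand $\Delta_sG_1(t)=g(t_1,H_1(t);\cdot)-g(s_1,H_1(s_1,t_2);\cdot)-g(t_1,H_1(t_1,s_2);\cdot)+g(s_1,H_1(s);\cdot)$, group the terms with location $t_1$ separately from those with location $s_1$, and insert and subtract $g(s_1,H_1(t_1,t_2);\cdot)-g(s_1,H_1(t_1,s_2);\cdot)$. This yields $\Delta_sG_1(t)=J_1+J_2$, with $J_1=A_{H_1(t_1,t_2)}-A_{H_1(t_1,s_2)}$ where $A_h:=g(t_1,h;\cdot)-g(s_1,h;\cdot)$, and $J_2=\phi(H_1(t_1,t_2))-\phi(H_1(t_1,s_2))-\phi(H_1(s_1,t_2))+\phi(H_1(s_1,s_2))$ where $\phi(h):=g(s_1,h;\cdot)$. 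For $J_1$ I would establish the mixed bound
\[
\norm{A_h-A_{h'}}_{L^2}\le C\,\abs{h-h'}\,\abs{t_1-s_1}^{h\wedge h'}\bigl(1+\abs{\log\abs{t_1-s_1}}\bigr),
\]
by writing $A_h-A_{h'}=\int_{h'}^h\partial_rA_r\,dr$ and checking (again by translation and scaling) that $\norm{\partial_rA_r}_{L^2}\le C\abs{t_1-s_1}^r\bigl(1+\abs{\log\abs{t_1-s_1}}\bigr)$; combined with $\abs{H_1(t_1,t_2)-H_1(t_1,s_2)}\le c_1\abs{t_2-s_2}^\nu$ from (H\ref{H1}), and the fact that $h\wedge h'\ge\min_\tau H_1(\tau)=:\mu_1>\mu$ while $\nu>\mu$, the spare powers absorb the logarithm and give $\norm{J_1}_{L^2}^2\le C(\abs{t_1-s_1}\abs{t_2-s_2})^{2\mu}$. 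For $J_2$, which is a second difference of the $C^2$ curve $\phi$, I would write it as $\bigl(\phi(b+\eta_1)-\phi(b)\bigr)-\bigl(\phi(d+\eta_2)-\phi(d)\bigr)$ with $b=H_1(t_1,s_2)$, $d=H_1(s_1,s_2)$, $\eta_1=H_1(t_1,t_2)-H_1(t_1,s_2)$, $\eta_2=H_1(s_1,t_2)-H_1(s_1,s_2)$, use Taylor's formula with integral remainder to peel off the term $(\eta_1-\eta_2)\int_0^1\phi'(d+r\eta_2)\,dr$, and insert the bounds $\abs{\eta_i}\le c_1\abs{t_2-s_2}^\nu$, $\abs{b-d}\le c_1\abs{t_1-s_1}^\nu$ from (H\ref{H1}) and, crucially, $\abs{\eta_1-\eta_2}=\abs{\Delta_sH_1(t)}\le c_2(\abs{t_1-s_1}\abs{t_2-s_2})^\nu$ from (H\ref{H2}); this produces the product structure $\norm{J_2}_{L^2}\le C(\abs{t_1-s_1}\abs{t_2-s_2})^\nu$, hence $\norm{J_2}_{L^2}^2\le C(\abs{t_1-s_1}\abs{t_2-s_2})^{2\mu}$. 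Collecting all the bounds gives~\eqref{3.1}.

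The step I expect to be the main obstacle is the control of $J_1$: the $L^2$-modulus of continuity in the Hurst parameter of the spatial increment $g(t_1,\cdot\,;\cdot)-g(s_1,\cdot\,;\cdot)$ genuinely carries a logarithmic factor $\abs{\log\abs{t_1-s_1}}$ (already visible in $\norm{A_h-A_{h'}}_{L^2}^2=c_h\abs{t_1-s_1}^{2h}+c_{h'}\abs{t_1-s_1}^{2h'}-2\abs{t_1-s_1}^{h+h'}\rho(h,h')$ with $\rho(h,h')=\langle g(1,h;\cdot),g(1,h';\cdot)\rangle$), so the clean exponent $2\mu$ in~\eqref{3.1} cannot be reached without exploiting that $\mu$ is \emph{strictly} below $\min_\tau H_i(\tau)$ --- precisely the hypothesis that leaves a surplus power to swallow the logarithm. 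A secondary point requiring care is organizing the decomposition $\Delta_sG_1(t)=J_1+J_2$ so that the genuinely two-dimensional cancellation is confined to the pure-Hurst second difference $J_2$, where (H\ref{H2}) --- rather than only (H\ref{H1}) --- supplies the needed product factor $(\abs{t_1-s_1}\abs{t_2-s_2})^\nu$.
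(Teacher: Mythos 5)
Your argument is correct, and it reaches the same three analytic inputs as the paper --- the scaling identity for the location increment at frozen Hurst, the Lipschitz bound in $h$ at frozen location, the mixed location--Hurst bound with the logarithm absorbed by $\mu<\min_\tau H_i(\tau)$ (these are exactly items (i)--(iii) of the paper's Lemma~\ref{lem_fbm}, resting on the uniform $L^2$-bounds for $\partial_h^k g$ of Lemma~\ref{lem_int}), and a single application of (H\ref{H2}) to the pure second Hurst difference --- but it organizes them differently. The paper decomposes $\Delta_sY_t=A_1+A_2+A_3+A_4$ by inserting intermediate Hurst values directly at the level of the field, and the pure-Hurst term $A_3$ is then controlled by Lemma~\ref{lem_fbs}, a four-parameter alternating sum proved through a case analysis on the signs of $(H_1-H_2)(H_3-H_4)$ and $(H_1'-H_2')(H_3'-H_4')$. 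You instead apply the product rule for rectangular increments to the tensor-product integrand $G_1(\tau)\otimes G_2(\tau)$ (your Leibniz identity is an exact algebraic identity --- I checked the expansion), which confines all the genuinely two-dimensional cancellation to the one-coordinate kernels, and you replace the sign case analysis by a Taylor expansion with integral remainder of the $C^2$ curve $h\mapsto g(s_1,h;\cdot)$ in $L^2(\R)$, which isolates the term $(\eta_1-\eta_2)\int_0^1\phi'(d+r\eta_2)\,dr$ where (H\ref{H2}) enters. What your route buys is transparency: the role of each hypothesis is visible ((H\ref{H1}) for the first-order Hurst variation, (H\ref{H2}) precisely for $\eta_1-\eta_2=\Delta_sH_1(t)$), and the elaborate Case 1a/1b/2a/2b structure of Lemma~\ref{lem_fbs} disappears. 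What the paper's route buys is a self-contained quantitative lemma (Lemma~\ref{lem_fbs}) stated for arbitrary quadruples of Hurst pairs, reusable outside this proof, and estimates kept at the level of the Gaussian field rather than of $L^2$-valued curves. You also correctly identify the crux --- the mixed bound, with its logarithmic loss that only the strict inequality $\mu<\min_\tau H_i(\tau)$ can repair --- which is exactly where the paper spends most of the effort in the proof of~\eqref{fbm3}.
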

\begin{proof}
Denote $s'=(s_1,t_2)$,  $t'=(t_1,s_2)$.
Then
\[
\Delta_sY_t
=Y_t-Y_{t'}+Y_s-Y_{s'}
=A_1+A_2+A_3+A_4,
\]
where
\begin{align*}
A_1&=B_t^{H(t)}-B_{t'}^{H(t)}+B_s^{H(t)}-B_{s'}^{H(t)},\\
A_2&=B_{s'}^{H(t)}-B_s^{H(t)}+B_s^{H(s')}-B_{s'}^{H(s')},\\
A_3&=B_s^{H(t)}-B_s^{H(t')}+B_s^{H(s)}-B_s^{H(s')},\\
A_4&=B_{t'}^{H(t)}-B_{t'}^{H(t')}-B_{s}^{H(t)}+B_{s}^{H(t')}.
\end{align*}
Hence,
\begin{equation}\label{3.2}
\E(\Delta_sY_t)^2
\le 4(\E A_1^2+\E A_2^2+\E A_3^2+\E A_4^2).
\end{equation}
We estimate each of 4 terms.

As the random field $B_t^{H(t)}$ is a fractional Brownian motion when $H(t)=\const$, then
\begin{equation}\label{A1}
\E A_1^2\le C_3\abs{t_1-s_1}^{2H_1(t)}\abs{t_2-s_2}^{2H_2(t)}
\le C_4(\abs{t_1-s_1}\abs{t_2-s_2})^{2\mu}.
\end{equation}

Consider $A_2$.
\[
A_2=A_{21}+A_{22},
\]
where
\begin{align*}
A_{21}&=B_{s'}^{H(t)}-B_s^{H(t)}+B_{s}^{(H_1(s'),H_2(t))}-B_{s'}^{(H_1(s'),H_2(t))},\\
A_{22}&=B_{s'}^{(H_1(s'),H_2(t))}-B_{s}^{(H_1(s'),H_2(t))}+B_s^{H(s')}-B_{s'}^{H(s')}.
\end{align*}
\begin{align*}
&\E A_{21}^2=
E\left(B_{s'}^{H(t)}-B_s^{H(t)}+B_{s}^{(H_1(s'),H_2(t))}-B_{s'}^{(H_1(s'),H_2(t))}\right)^2\\
&=\int_{\R^2}\left(\left[(s_1-u_1)_+^{H_1(t)-\frac12}-(-u_1)_+^{H_1(t)-\frac12}\right]
    \left[(t_2-u_2)_+^{H_2(t)-\frac12}-(-u_2)_+^{H_2(t)-\frac12}\right]\right.\\
&-\left[(s_1-u_1)_+^{H_1(t)-\frac12}-(-u_1)_+^{H_1(t)-\frac12}\right]
    \left[(s_2-u_2)_+^{H_2(t)-\frac12}-(-u_2)_+^{H_2(t)-\frac12}\right]\\
&+\left[(s_1-u_1)_+^{H_1(s')-\frac12}-(-u_1)_+^{H_1(s')-\frac12}\right]
    \left[(s_2-u_2)_+^{H_2(t)-\frac12}-(-u_2)_+^{H_2(t)-\frac12}\right]\\
&-\left.\left[(s_1-u_1)_+^{H_1(s')-\frac12}-(-u_1)_+^{H_1(s')-\frac12}\right]
    \left[(t_2-u_2)_+^{H_2(t)-\frac12}-(-u_2)_+^{H_2(t)-\frac12}\right]\right)^2du\\
&=\int_\R\left(\left[(s_1-u_1)_+^{H_1(t)-\frac12}-(-u_1)_+^{H_1(t)-\frac12}\right]
    -\left[(s_1-u_1)_+^{H_1(s')-\frac12}-(-u_1)_+^{H_1(s')-\frac12}\right]\right)^2du_1\\
&\times\int_\R\left(\left[(t_2-u_2)_+^{H_2(t)-\frac12}-(-u_2)_+^{H_2(t)-\frac12}\right]
    -\left[(s_2-u_2)_+^{H_2(t)-\frac12}-(-u_2)_+^{H_2(t)-\frac12}\right]\right)^2du_2
\end{align*}

By Lemma~\ref{lem_fbm},
\[
\E A_{21}^2\le K_2(H_1(t)-H_1(s'))^2\cdot K_1\abs{t_2-s_2}^{2H_2(t)}
\]
Taking into account Condition~(H\ref{H1}), we get
\begin{equation}\label{A21}
\E A_{21}^2\le C_5(\abs{t_1-s_1}\abs{t_2-s_2})^{2\mu}
\end{equation}

Now we estimate $A_{22}$.
\begin{align*}
&\E A_{22}^2=
E\left(B_{s'}^{(H_1(s'),H_2(t))}-B_{s}^{(H_1(s'),H_2(t))}+B_s^{H(s')}-B_{s'}^{H(s')}\right)^2\\
&=\int_\R\left[(s_1-u_1)_+^{H_1(s')-\frac12}-(-u_1)_+^{H_1(s')-\frac12}\right]^2du_1\\
&\times\int_\R\left(\left[(t_2-u_2)_+^{H_2(t)-\frac12}-(-u_2)_+^{H_2(t)-\frac12}\right]\right.\\
&-\left[(s_2-u_2)_+^{H_2(t)-\frac12}-(-u_2)_+^{H_2(t)-\frac12}\right]
    +\left[(s_2-u_2)_+^{H_2(s')-\frac12}-(-u_2)_+^{H_2(s')-\frac12}\right]\\
&-\left.\left[(t_2-u_2)_+^{H_2(s')-\frac12}-(-u_2)_+^{H_2(s')-\frac12}\right]\right)^2du_2
\end{align*}

By Lemma~\ref{lem_fbm},
\[
\E A_{22}^2\le K_1s_1^{2H_1(s')}K_3(t_2-s_2)^{2\mu}(H_2(t)-H_2(s'))^2
\]
Considering the Condition~(H\ref{H1}), we obtain
\begin{equation}\label{A22}
\E A_{22}^2\le C_6(\abs{t_1-s_1}\abs{t_2-s_2})^{2\mu}
\end{equation}
\eqref{A21} and~\eqref{A22} imply that
\begin{equation}\label{A2}
\E A_2^2\le C_7(\abs{t_1-s_1}\abs{t_2-s_2})^{2\mu}
\end{equation}

Further, by Lemma~\ref{lem_fbs},
\begin{align*}
\E A_3^2&\le L\left[(H_1(t)-H_1(t')+H_1(s)-H_1(s'))^2+(H_2(t)-H_2(t')\right.\\
&+H_2(s)-H_2(s'))^2+\left((H_1(t)-H_1(t'))^2+(H_2(t)-H_2(t'))^2\right.\\
&+\left.(H_1(s)-H_1(s'))^2+(H_2(s)-H_2(s'))^2\right)\left((H_1(t)-H_1(s'))^2\right.\\
&+\left.\left.(H_2(t)-H_2(s'))^2+(H_1(s)-H_1(t'))^2+(H_2(s)-H_2(t'))^2\right)\right],
\end{align*}
and taking into account the conditions~\eqref{H1} and~\eqref{H2}, we get
\begin{equation}\label{A3}
\E A_3^2\le C_8(\abs{t_1-s_1}\abs{t_2-s_2})^{2\mu}.
\end{equation}

The estimation of $A_4$ is analogous to that of $A_2$.
We have
\begin{equation}\label{A4}
\E A_4^2\le C_9(\abs{t_1-s_1}\abs{t_2-s_2})^{2\mu}.
\end{equation}

Combining~\eqref{3.2}, \eqref{A1}, \eqref{A2}--\eqref{A4}, we get~\eqref{3.1}.
\end{proof}

Theorems~\ref{th3.1} and~\ref{th3.2} imply that multifractional Brownian sheet
$B_t^{H(t)}$
satisfies the conditions of Theorem~\ref{th2.1}.
Therefore,
\begin{corollary}
For any
$\beta_1,\beta_2\in(0,\mu)$
\[
\norm{B_t^{H(t),\eps}-B_t^{H(t)}}_{1,\beta_1,\beta_2}\xrightarrow{\Prob}0, \quad \eps\to0+,
\]
where
\[
B_t^{H(t),\eps}=\frac{1}{\eps^2}\int_{t_1}^{t_1+\eps}\!\!\int_{t_2}^{t_2+\eps}B_s^{H(s)}\,ds.
\]
\end{corollary}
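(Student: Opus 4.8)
The plan is simply to check that the multifractional Brownian sheet $Y_t=B_t^{H(t)}$ meets the three hypotheses imposed on the field $B_t$ at the start of Section~\ref{sec3}, and then to quote Theorem~\ref{th2.1} verbatim. Concretely, I would argue as follows.

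First, $Y_t$ is a Gaussian field: by its very definition it is a Wiener integral $\int_{\R^2}g_t(u)\,dW_u$ against a standard Wiener sheet, hence for each finite collection of points the vector $(Y_{t^{(1)}},\dots,Y_{t^{(k)}})$ is a linear image of a Gaussian white noise and is therefore jointly Gaussian. Second, the key second-moment bound~\eqref{0} holds with $\lambda=2\mu$: this is exactly the content of Theorem~\ref{th3.2}, which gives $\E(\Delta_sY_t)^2\le C(\abs{t_1-s_1}\abs{t_2-s_2})^{2\mu}$ for all $s,t\in[0,T]$; since we assumed $\mu>1/2$, we have $\lambda=2\mu>1$, so the admissibility requirement $\lambda>1$ of Theorem~\ref{th2.1} is satisfied. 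Third, the sample paths of $Y_t$ are continuous with probability one, which is Theorem~\ref{th3.1}; here one should note that Theorem~\ref{th3.1} is stated for small rectangles $[a,b]\subset(0,T]$, so a.\,s.\ continuity on the whole of $[0,T]$ follows by covering $[0,T]$ with finitely many such rectangles, intersecting the corresponding full-measure events, and using the boundary normalization $Y_t=0$ for $t\in\partial\R_+^2$ together with the continuity up to the axes.

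With these three facts in hand, Theorem~\ref{th2.1} applies to $B_t=Y_t$ with parameter $\lambda=2\mu$, yielding $\norm{Y^\eps-Y}_{1,\beta_1,\beta_2}\xrightarrow{\Prob}0$ as $\eps\to0+$ for every $\beta_1,\beta_2\in(0,\lambda/2)=(0,\mu)$, where $Y^\eps_t=\eps^{-2}\int_{t_1}^{t_1+\eps}\int_{t_2}^{t_2+\eps}Y_s\,ds=B_t^{H(t),\eps}$; this is precisely the assertion of the corollary.

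I do not expect a genuine obstacle here: all the analytic difficulty — the delicate decomposition $\Delta_sY_t=A_1+A_2+A_3+A_4$ and the estimates on each $A_i$ using Lemmas on fBm and fBs and conditions (H\ref{H1})--(H\ref{H2}) — has already been carried out in Theorem~\ref{th3.2}, and the Garsia--Rodemich--Rumsey argument underpinning the convergence is internal to the proof of Theorem~\ref{th2.1}. The only point requiring a word of care is the passage from the local continuity statement of Theorem~\ref{th3.1} to global a.\,s.\ continuity on $[0,T]$, which is routine.
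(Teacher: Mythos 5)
Your proposal is correct and follows exactly the route the paper takes: the corollary is obtained by checking the three hypotheses of Theorem~\ref{th2.1} for $Y_t=B_t^{H(t)}$ — Gaussianity from the Wiener-integral representation, the bound~\eqref{0} with $\lambda=2\mu>1$ from Theorem~\ref{th3.2}, and a.\,s.\ continuity from Theorem~\ref{th3.1} — and then invoking that theorem with $\beta_1,\beta_2\in(0,\lambda/2)=(0,\mu)$. Your extra remark on patching the local continuity of Theorem~\ref{th3.1} into global continuity on $[0,T]$ is a point the paper glosses over, but it is routine and does not change the argument.
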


\section{Appendix}
In this section we prove some technical lemmas that have been used in the proof of Theorem~\ref{th3.2}.
\subsection{Bounds for fractional Brownian motion}
Let
$(\Omega, \F, \Prob)$
be a complete probability space.
\begin{definition}
\emph{Fractional Brownian motion} with Hurst index
$H\in(0,1)$ is a centered Gaussian process $\widehat Z^{H}=\{\widehat Z_{t}^{H}, t\ge 0\}$
with stationary increments and the covariance function
\[
\E\left(\widehat Z_{t}^{H}\widehat Z_{s}^{H}\right)
=\frac{1}{2}\left(t^{2H}+s^{2H}-|t-s|^{2H}\right).
\]
\end{definition}

It is well-known that fractional Brownian motion has a continuous modification and can be represented in the following form.
\[
\widehat Z_t^H
=C_H\int_\R\left[(t-u)_+^{H-\frac12}-(-u)_+^{H-\frac12}\right]dW_u,
\]
where $W$ is a Wiener process,
$C_H=\frac{(2H\,\sin\pi H\,\Gamma(2H))^{1/2}}{\Gamma(H+1/2)}$
(see~\cite[Chapter~1.3]{Mishura08}).

Let
$\frac12<\mu<H_{\min}\le H_{\max}<\nu<1$.
Consider a family of random variablles
\[
Z_t^H
=\int_\R\left[(t-u)_+^{H-\frac12}-(-u)_+^{H-\frac12}\right]dW_u
=C_H^{-1}\widehat Z_t^H,
\]
$t\in[0,T]$, $H\in[\mu,\nu]$.

\begin{lemma}\label{lem_fbm}
There exist positive constants $K_1$, $K_2$, $K_3$ such that
\begin{enumerate}
\item for all $t_1,t_2\in[0,T]$, $H\in[H_{\min},H_{\max}]$
\begin{equation}\label{fbm1}
    \E\left(Z_{t_1}^H-Z_{t_2}^H\right)^2\le K_1\abs{t_1-t_2}^{2H};
\end{equation}
\item for all $t\in[0,T]$, $H_1,H_2\in[H_{\min},H_{\max}]$
\begin{equation}\label{fbm2}
    \E\left(Z_t^{H_1}-Z_t^{H_2}\right)^2\le K_2(H_1-H_2)^2.
\end{equation}
\item for all $t_1,t_2\in[0,T]$, $H_1,H_2\in[H_{\min},H_{\max}]$
\begin{equation}\label{fbm3}
    \E\left(Z_{t_1}^{H_1}-Z_{t_2}^{H_1}-Z_{t_1}^{H_2}+Z_{t_2}^{H_2}\right)^2\le K_3(t_1-t_2)^{2\mu}(H_1-H_2)^2.
\end{equation}
\end{enumerate}
\end{lemma}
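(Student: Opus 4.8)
The plan is to prove the three estimates separately, all by direct analysis of the Wiener‐integral (Itô isometry) representations of $Z_t^H$, reducing everything to deterministic $L^2(\R)$ bounds on the kernels
\[
f_{t,H}(u):=(t-u)_+^{H-1/2}-(-u)_+^{H-1/2}.
\]

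\emph{Bound \eqref{fbm1}.} Here $\E\bigl(Z_{t_1}^H-Z_{t_2}^H\bigr)^2=C_H^{-2}\E\bigl(\wh Z_{t_1}^H-\wh Z_{t_2}^H\bigr)^2=C_H^{-2}\abs{t_1-t_2}^{2H}$ by stationarity of the increments of fBm. Since $H\in[H_{\min},H_{\max}]$ is bounded away from $0$ and $1$, $C_H^{-2}$ is bounded above by a constant $K_1=\sup_{H\in[H_{\min},H_{\max}]}C_H^{-2}$, giving \eqref{fbm1}. (No GRR or self-similarity gymnastics needed; this is just the definition.)

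\emph{Bound \eqref{fbm2}.} Write $\E\bigl(Z_t^{H_1}-Z_t^{H_2}\bigr)^2=\norm{f_{t,H_1}-f_{t,H_2}}_{L^2(\R)}^2$. The key observation is that $H\mapsto f_{t,H}(u)$ is differentiable with $\partial_H f_{t,H}(u)=(t-u)_+^{H-1/2}\ln(t-u)_+-(-u)_+^{H-1/2}\ln(-u)_+$, so by the mean value theorem $f_{t,H_1}-f_{t,H_2}=(H_1-H_2)\,\partial_H f_{t,\xi}$ for some intermediate $\xi=\xi(u)\in[H_{\min},H_{\max}]$. It then suffices to show $\sup_{t\in[0,T]}\sup_{H\in[H_{\min},H_{\max}]}\norm{\partial_H f_{t,H}}_{L^2(\R)}<\infty$. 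This is a routine computation: the integrand decays like $\abs{u}^{2H-3}\ln^2\abs{u}$ as $\abs{u}\to\infty$ (which is integrable since $2H-3<-1$), has an integrable singularity of type $\abs{u}^{2H-1}\ln^2\abs{u}$ near $u=0$ and $u=t$, and is continuous in $(t,H)$ on the compact set $[0,T]\times[H_{\min},H_{\max}]$, so the supremum is finite and equals some constant $K_2$.

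\emph{Bound \eqref{fbm3}.} This is the main obstacle, as it combines a $t$-increment and an $H$-increment and must produce the product $(t_1-t_2)^{2\mu}(H_1-H_2)^2$ with the \emph{optimal} exponent $2\mu$ in $t$ — a plain $L^\infty$ bound on $\partial_H f$ would only give $(t_1-t_2)^{\text{something}}$ via the kernel difference, so we need to exploit cancellation in both variables simultaneously. The approach: by Itô isometry the left side is $\norm{g}_{L^2(\R)}^2$ where $g(u)=\bigl(f_{t_1,H_1}-f_{t_2,H_1}\bigr)(u)-\bigl(f_{t_1,H_2}-f_{t_2,H_2}\bigr)(u)$. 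Apply the mean value theorem in $H$ to the $t$-difference: $g=(H_1-H_2)\,\partial_H\!\bigl(f_{t_1,\xi}-f_{t_2,\xi}\bigr)$ for some $\xi\in[H_{\min},H_{\max}]$, so it remains to bound $\norm{\partial_H f_{t_1,H}-\partial_H f_{t_2,H}}_{L^2(\R)}^2\le K_3' (t_1-t_2)^{2\mu}$ uniformly in $H\in[H_{\min},H_{\max}]$. One writes $\partial_H f_{t,H}(u)=(t-u)_+^{H-1/2}\ln(t-u)_+-(-u)_+^{H-1/2}\ln(-u)_+$, so the $t$-difference only involves the first term, $\phi_t(u):=(t-u)_+^{H-1/2}\ln(t-u)_+$, and the needed estimate is a Hölder-type bound $\norm{\phi_{t_1}-\phi_{t_2}}_{L^2(\R)}\le C\abs{t_1-t_2}^{\mu}$. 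This is proved the same way one proves $\norm{(t_1-\cdot)_+^{H-1/2}-(t_2-\cdot)_+^{H-1/2}}_{L^2}\asymp\abs{t_1-t_2}^{H}$ for fBm: split the integral over $\{u<t_1\wedge t_2\}$ and $\{t_1\wedge t_2\le u\le t_1\vee t_2\}$, use $\abs{a^{H-1/2}\ln a-b^{H-1/2}\ln b}\le C_\mu\abs{a-b}^{\mu-1/2}(1+\abs{\ln\abs{a-b}}+\dots)$ type inequalities (the logarithm only worsens things by lower-order log factors, which are absorbed since $\mu<H_{\min}$ leaves room), and scale. The logarithmic factors force us to use the strictly smaller exponent $\mu$ rather than $H$, which is exactly why the statement is phrased with $(t_1-t_2)^{2\mu}$ and the hypothesis $\mu<H_{\min}$ is invoked. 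Finally $K_3=K_3'$ times the earlier uniform bounds, and since $[0,T]\times[H_{\min},H_{\max}]$ is compact the constant is finite.
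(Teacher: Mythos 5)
Your overall strategy coincides with the paper's: part (i) is handled identically (the exact identity $\E(Z_{t_1}^H-Z_{t_2}^H)^2=C_H^{-2}\abs{t_1-t_2}^{2H}$ plus boundedness of $C_H^{-2}$ on the compact parameter interval), and part (iii) likewise proceeds by applying the mean value theorem in $H$ and then establishing a H\"older-$\mu$ bound in $t$ for the $L^2$ norm of the differentiated kernel, absorbing the logarithms into small powers at the cost of replacing $H$ by $\mu<H_{\min}$. The only structural difference is in (ii): the paper obtains \eqref{fbm2} as a corollary of \eqref{fbm3} by taking $t_2=0$ (since $Z_0^H=0$), whereas you prove it directly; your route is fine, though note that the mean value point $\xi=\xi(u)$ varies with $u$, so what you actually need is $\int_\R\sup_{H}\abs{\partial_H f_{t,H}(u)}^2\,du<\infty$ rather than $\sup_H\norm{\partial_H f_{t,H}}_{L^2}^2<\infty$; this is harmless here because your integrability analysis is uniform in $H$, but it should be stated that way.

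The one genuine gap is in the tail estimate for part (iii). After reducing to $\norm{\phi_{t_1}-\phi_{t_2}}_{L^2(\R)}\le C\abs{t_1-t_2}^{\mu}$ with $\phi_t(u)=(t-u)_+^{H-1/2}\ln(t-u)_+$, you propose to use a pointwise inequality of the form $\abs{a^{H-1/2}\ln a-b^{H-1/2}\ln b}\le C\abs{a-b}^{\mu-1/2}(1+\abs{\ln\abs{a-b}}+\dots)$ on the region $\set{u<t_1\wedge t_2}$. With $a=t_1-u$, $b=t_2-u$ and $a-b=t_1-t_2$ fixed, the right-hand side of that inequality does not decay as $u\to-\infty$, so its square is not integrable over the unbounded region and the bound cannot close the argument there. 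On the tail one must instead exploit the decay of the difference in $u$, e.g.\ via the Newton--Leibniz representation $\phi_{t_1}(u)-\phi_{t_2}(u)=\int_{t_2}^{t_1}(v-u)^{H-3/2}\bigl(1+(H-\tfrac12)\ln(v-u)\bigr)dv$, then bound $\abs{\ln x}\le \wt C_\delta x^{\delta}$ for $x\ge1$ (and $\abs{\ln x}\le C_\eps x^{-\eps}$ on the bounded piece $u\in[t_2-1,t_2]$), and integrate the square of the resulting kernel; this is precisely the paper's inequality \eqref{fbm7} and the splitting into $(-\infty,t_2-1]$, $[t_2-1,t_2]$, $[t_2,t_1]$. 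Your pointwise H\"older bound is the right tool only on the bounded middle region $t_1\wedge t_2\le u\le t_1\vee t_2$. Once the tail is handled by the derivative representation, the rest of your argument goes through and the constant is finite by compactness, as you say.
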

\begin{proof}
(i)
By the definition,
\[
\E\left(Z_{t_1}^H-Z_{t_2}^H\right)^2
=C_H^{-2}\E\left(\widehat Z_{t_1}^H-\widehat Z_{t_2}^H\right)^2
=C_H^{-2}\abs{t_1-t_2}^{2H},
\]
which entails \eqref{fbm1} because $C_H^{-2}$ is bounded when $H\in[\mu,\nu]$.

(ii)
The inequality~\eqref{fbm2} is a corollary of~\eqref{fbm3}.

(iii)
We prove~\eqref{fbm3}.
Let for definiteness,
$t_2\le t_1$, $H_2\le H_1$.
We can write
\begin{align*}
&\E\left(Z_{t_1}^{H_1}-Z_{t_2}^{H_1}-Z_{t_1}^{H_2}+Z_{t_2}^{H_2}\right)^2\\
&\quad=\int_\R\left[(t_1-u)_+^{H_1-\frac12}-(t_2-u)_+^{H_1-\frac12}-(t_1-u)_
    +^{H_2-\frac12}+(t_2-u)_+^{H_2-\frac12}\right]^2du\\
&\quad=I_1+I_2,
\end{align*}
where
\begin{align*}
I_1&=\int_{-\infty}^{t_2}\left[(t_1-u)^{H_1-\frac12}-(t_2-u)^{H_1-\frac12}
    -(t_1-u)^{H_2-\frac12}+(t_2-u)^{H_2-\frac12}\right]^2du,\\
I_2&=\int_{t_2}^{t_1}\left[(t_1-u)^{H_1-\frac12}-(t_1-u)^{H_2-\frac12}\right]^2du.
\end{align*}
By the theorem on finite increments, there exists
$h\in[H_2,H_1]$ such that
\begin{equation}\label{fbm6}
\begin{split}
I_1&=(H_1-H_2)^2\int_{-\infty}^{t_2}\left[(t_1-u)^{h-\frac12}\ln(t_1-u)
    -(t_2-u)^{h-\frac12}\ln(t_2-u)\right]^2du\\
&=(H_1-H_2)^2\int_{-\infty}^{t_2}\left[\int_{t_2}^{t_1}
    (v-u)^{h-\frac32}\left(1+\left(h-\frac12\right)\ln(v-u)\right)dv\right]^2du.
\end{split}
\end{equation}

First we prove that for ahy
$h\in[\mu,\nu]$
\begin{equation}\label{fbm7}
\int_{-\infty}^{t_2}\left[\int_{t_2}^{t_1}(v-u)^{h-3/2}dv\right]^2du
\le C_3(t_1-t_2)^{2\mu},
\end{equation}
where $C_3>0$ is a constant.
Actually
\begin{align*}
&\int_{-\infty}^{t_2}\left[\int_{t_2}^{t_1}(v-u)^{h-3/2}dv\right]^2du\\
&=\int_{-\infty}^{2t_2-t_1}\left[\int_{t_2}^{t_1}(v-u)^{h-3/2}dv\right]^2du
    +\int_{2t_2-t_1}^{t_2}\left[\int_{t_2}^{t_1}(v-u)^{h-3/2}dv\right]^2du\\
&\le\int_{-\infty}^{2t_2-t_1}\left[\int_{t_2}^{t_1}(t_2-u)^{h-3/2}dv\right]^2du
    +\int_{2t_2-t_1}^{t_2}\left[\int_{t_2}^{t_1}(v-t_2)^{h-3/2}dv\right]^2du\\
&=(t_1-t_2)^{2h}\left((2-2h)^{-1}+(h-1/2)^{-2}\right)\\
&=\left(\frac{t_1-t_2}{T+1}\right)^{2h}(T+1)^{2h}\left((2-2h)^{-1}+(h-1/2)^{-2}\right)\\
&\le (t_1-t_2)^{2\mu}(T+1)\left((2-2\nu)^{-1}+(\mu-1/2)^{-2}\right).
\end{align*}

It is obvious that for all
$\eps,\delta>0$
there exist positive constants
$C_\eps$ and $\wt C_\delta$ such that
\begin{align}
\abs{\ln x}&\le C_\eps x^{-\eps}&&\text{if }0<x\le1,\label{fbm8}\\
\abs{\ln x}&\le \wt C_\delta x^{\delta}&&\text{if }x\ge1.\label{fbm9}
\end{align}

First we show that
\begin{equation}\label{fbm10}
\int_{-\infty}^{t_2-1}\left[\int_{t_2}^{t_1}
    (v-u)^{h-3/2}\left(1+(h-1/2)\ln(v-u)\right)dv\right]^2du
\le C_4(t_1-t_2)^{2\mu}.
\end{equation}
Applying the inequality~\eqref{fbm9} with $\delta:=(\nu-H_{\max})/2$, we obtain
\begin{align*}
&\int_{-\infty}^{t_2-1}\left[\int_{t_2}^{t_1}
    (v-u)^{h-\frac32}\left(1+(h-1/2)\ln(v-u)\right)dv\right]^2du\\
&\quad\le 2\int_{-\infty}^{t_2-1}\left[\int_{t_2}^{t_1}(v-u)^{h-\frac32}dv\right]^2du\\
&\quad\quad+2(\nu-1/2)^2\wt C_\delta^2 \int_{-\infty}^{t_2-1}\left[\int_{t_2}^{t_1}(v-u)^{h-\frac32+\delta}dv\right]^2du.
\end{align*}
Using~\eqref{fbm7}, we get~\eqref{fbm10}.

Secondly we prove that
\begin{equation}\label{fbm11}
\int_{t_2-1}^{t_2}\left[\int_{t_2}^{t_1}
    (v-u)^{h-3/2}\left(1+(h-1/2)\ln(v-u)\right)dv\right]^2du
\le C_5(t_1-t_2)^{2\mu}.
\end{equation}
Choose $\eps:=(\mu-H_{\min})/2$.
Then~\eqref{fbm8} implies that if
$u\in[t_2-1,t_2]$,
$v\in[t_2,t_1]$
then
\[
\abs{\ln(v-u)}=\abs{\ln(T+1)+\ln\frac{v-u}{T+1}}
\le\ln(T+1)+C_\eps\left(\frac{v-u}{T+1}\right)^{-\eps}.
\]
Therefore
\begin{multline*}
\int_{t_2-1}^{t_2}\left[\int_{t_2}^{t_1}
    (v-u)^{h-3/2}\left(1+(h-1/2)\ln(v-u)\right)dv\right]^2du\\
\le 2(1+(\nu-1/2)\ln(T+1))^2
    \int_{t_2-1}^{t_2}\left[\int_{t_2}^{t_1}
    (v-u)^{h-3/2}dv\right]^2du\\
+2(\nu-1/2)^2C_\eps^2(T+1)^{2\eps}\int_{t_2-1}^{t_2}\left[\int_{t_2}^{t_1}
    (v-u)^{h-3/2-\eps}dv\right]^2du.
\end{multline*}
Considering~\eqref{fbm7}, we obtain~\eqref{fbm11}.

Combining~\eqref{fbm6}, \eqref{fbm10}, \eqref{fbm11}, we get
\[
I_1\le C_6(t_1-t_2)^{2\mu}(H_1-H_2)^2.
\]

It remains to estimate $I_2$.
By the theorem on finite increments, there exists
$h\in[H_2,H_1]$
such that
\[
I_2=(H_1-H_2)^2\int_{t_2}^{t_1}\left[(t_1-u)^{h-1/2}\ln(t_1-u)\right]^2du.
\]
Choose $\eps:=(\mu-H_{\min})/2$.
Then~\eqref{fbm8} implies that if
$u\in[t_2,t_1]$
then
\[
\abs{\ln(t_1-u)}
=\abs{\ln(T+1)+\ln\frac{t_1-u}{T+1}}
\le\ln(T+1)+C_\eps\left(\frac{t_1-u}{T+1}\right)^{-\eps},
\]
which entails
\begin{align*}
I_2&\le2(H_1-H_2)^2\left(\frac{\ln^2(T+1)}{2h}(t_1-t_2)^{2h}
    +\frac{C_\eps^2(T+1)^{2\eps}}{2(h-\eps)}(t_1-t_2)^{2(h-\eps)}\right)\\
&\le C_7(t_1-t_2)^{2\mu}(H_1-H_2)^2.
\end{align*}
Now the proof is complete.
\end{proof}

\subsection{Bounds for integrals}
Let
$\frac12<\mu<H_{\min}\le H_{\max}<\nu<1$.
\begin{lemma}\label{lem_int}
Let
\[
f(t,u,h)=(t-u)_+^{h-1/2}-(-u)_+^{h-1/2},
\quad t\in[0,T],u\in\R, h\in[H_{\min},H_{\max}].
\]
Then for all $t\in[0,T]$, $h\in[H_{\min},H_{\max}]$
\begin{gather}
\int_\R[f(t,u,h)]^2du<+\infty,\label{int1}\\
\int_\R\left[f'_h(t,u,h)\right]^2du<+\infty,\label{int2}\\
\int_\R\left[f''_{hh}(t,u,h)\right]^2du<+\infty.\label{int3}
\end{gather}
\end{lemma}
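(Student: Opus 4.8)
The plan is to handle the three integrals simultaneously, since the integrands differ only by a power of a logarithm. For $x>0$ I would set $F_0(x)=x^{h-1/2}$, $F_1(x)=x^{h-1/2}\ln x$, $F_2(x)=x^{h-1/2}\ln^2 x$, with $F_j(0):=0$; because $h-1/2>0$ each $F_j$ is continuous on $[0,\infty)$, and one has $f(t,u,h)=F_0((t-u)_+)-F_0((-u)_+)$, $f'_h(t,u,h)=F_1((t-u)_+)-F_1((-u)_+)$ and $f''_{hh}(t,u,h)=F_2((t-u)_+)-F_2((-u)_+)$. Since both arguments vanish for $u>t$, all three integrals reduce to integrals over $(-\infty,t]$, which I would split as $[-1,t]\cup(-\infty,-1]$.

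First I would dispose of the bounded part: on the compact interval $[-1,t]$ each map $u\mapsto F_j((t-u)_+)$ and $u\mapsto F_j((-u)_+)$ is continuous, hence bounded, so that piece contributes a finite amount to each of \eqref{int1}--\eqref{int3}. Then, for the tail $u\le-1$, I would substitute $v=-u\ge1$, so that the integrand becomes $[F_j(v+t)-F_j(v)]^2$, and use the mean value theorem to write $F_j(v+t)-F_j(v)=t\,F_j'(\zeta)$ with $\zeta\in(v,v+t)$, where $F_j'(x)$ equals $x^{h-3/2}$ times a polynomial of degree $\le2$ in $\ln x$. Fixing $\delta\in(0,(1-H_{\max})/2)$ and using \eqref{fbm9} to bound $\abs{\ln\zeta}\le\wt C_\delta\,\zeta^\delta$ for $\zeta\ge1$, one gets $\abs{F_j'(\zeta)}\le C\,\zeta^{h-3/2+2\delta}\le C\,v^{h-3/2+2\delta}$ (using $h-3/2+2\delta<0$), whence the tail integrand is at most $C t^2 v^{2h-3+4\delta}$; since $2h-3+4\delta\le 2H_{\max}-3+4\delta<-1$, the tail integral converges. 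Combining the two pieces yields \eqref{int1}, \eqref{int2} and \eqref{int3}.

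The hard part — really the only nontrivial point — is the tail estimate: the two terms of $F_j((t-u)_+)-F_j((-u)_+)$ cannot be bounded individually, because $\int^\infty v^{2h-1}\,dv=\infty$, so one must use the cancellation between them, exactly as in the proof of Lemma~\ref{lem_fbm}; the mean value theorem supplies the extra factor $v^{-1}$ that renders the integrand square-integrable, and the logarithmic factors coming from differentiation in $h$ are absorbed into an arbitrarily small power of $v$ via \eqref{fbm9}.
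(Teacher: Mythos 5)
Your proof is correct and follows essentially the same route as the paper's: both split off a compact neighbourhood of the singularities and, on the tail $u\le -1$, exploit the cancellation between the two terms by differentiating in the spatial variable (you via the mean value theorem, the paper via $F_j(t-u)-F_j(-u)=\int_0^t\partial_v F_j(v-u)\,dv$ together with \eqref{fbm7}), absorbing the logarithms into a small power of $v$ by means of \eqref{fbm9}. The only substantive difference is that the paper's explicit computation on $[-1,t]$ yields bounds of the form $Ct^{2\mu}$ with $C$ uniform in $h\in[H_{\min},H_{\max}]$ and $t\in[0,T]$ --- which is what is actually invoked in Lemma~\ref{lem_fbs} --- whereas your soft ``continuous on a compact interval, hence bounded'' step, as written for fixed $(t,h)$, gives only pointwise finiteness; this is harmless, since $(x,h)\mapsto x^{h-1/2}\ln^j x$ is jointly continuous on $[0,T+1]\times[H_{\min},H_{\max}]$ and your tail constant is already uniform, so the needed uniformity follows immediately.
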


\begin{proof}
We prove~\eqref{int3}.
Inequalities~\eqref{int1} and~\eqref{int2} are proved in a similar way.
\begin{align*}
\int_\R&\left[f''_{hh}(t,u,h)\right]^2du
=\int_{-\infty}^0\left[(t-u)^{h-\frac12}\ln^2(t-u)-(-u)^{h-\frac12}\ln^2(-u)\right]^2du\\
&\quad+\int_0^t(t-u)^{2h-1}\ln^4(t-u)du\\
&=\int_{-\infty}^0\left[\int_0^t(v-u)^{h-\frac32}
\left(\left(h-\frac12\right)\ln^2(v-u)+2\ln(v-u)\right)dv\right]^2du\\
&\quad+\int_0^t(t-u)^{2h-1}\ln^4(t-u)du
=:I_1+I_2+I_3,
\end{align*}
where
\begin{align*}
I_1&=\int_{-\infty}^{-1}\left[\int_0^t(v-u)^{h-\frac32}
\left(\left(h-\frac12\right)\ln^2(v-u)+2\ln(v-u)\right)dv\right]^2du,\\
I_2&=\int_{-1}^0\left[\int_0^t(v-u)^{h-\frac32}
\left(\left(h-\frac12\right)\ln^2(v-u)+2\ln(v-u)\right)dv\right]^2du,\\
I_3&=\int_0^t(t-u)^{2h-1}\ln^4(t-u)du.
\end{align*}

We study each of three terms.

1. Applying the inequality~\eqref{fbm9}, we obtain
\begin{align*}
I_1&\le\int_{-\infty}^{-1}\left[\int_0^t(v-u)^{h-3/2}
\left(\left(h-\frac12\right)\wt C_\delta^2(v-u)^{2\delta}
+2\wt C_\delta(v-u)^\delta\right)dv\right]^2du\\
&\le2\left(h-\frac12\right)^2\wt C_\delta^4
\int_{-\infty}^{-1}\left[\int_0^t(v-u)^{h-3/2+4\delta}dv\right]^2du\\
&\quad+8\wt C_\delta^2
\int_{-\infty}^{-1}\left[\int_0^t(v-u)^{h-3/2+2\delta}dv\right]^2du
\le Ct^{2\mu}<\infty
\end{align*}
(the last estimate follows from the inequality~\eqref{fbm7}).

2. Consider $I_2$.
\begin{multline*}
I_2=\int_{-1}^0\left[\int_0^t(v-u)^{h-3/2}
\left(\left(h-\frac12\right)\ln^2\frac{v-u}{T+1}+2\ln\frac{v-u}{T+1}\right.\right.\\
+\left.\left.\left(h-\frac12\right)\ln^2(T+1)+2\ln(T+1)\right)dv\right]^2du.
\end{multline*}
Using inequality~\eqref{fbm8}, we get
\begin{multline*}
I_2\le\int_{-1}^0\left[\int_0^t(v-u)^{h-3/2}
\left(C_\eps^2\left(h-\frac12\right)(T+1)^{2\eps}(v-u)^{-2\eps}\right.\right.\\
+2C_\eps(T+1)^{\eps}(v-u)^{-\eps}
+\left(h-\frac12\right)\ln^2(T+1)\\
+\left.\left.2\ln(T+1)\vphantom{\frac12}\right)dv\right]^2du
\le Ct^{2\mu}<\infty,
\end{multline*}
where the last estimate follows from the inequality~\eqref{fbm7}.

3. Consider $I_3$.
\begin{align*}
I_3&=\int_0^tv^{2h-1}\ln^4vdv
\le\int_0^1v^{2h-1}\ln^4vdv+\int_1^{1\vee t}v^{2h-1}\ln^4vdv\\
&\le\int_0^1v^{2\mu-1}\ln^4vdv+\int_1^{1\vee t}v^{2\nu-1}\ln^4vdv
<\infty.
\end{align*}
Thus, inequality~\eqref{int3} holds.
\end{proof}

\subsection{Bounds for fractional Brownian sheet}
Let
$\frac12<\mu<H_{\min}\le H_{\max}<\nu<1$.
We consider a family of random variables
\begin{multline*}
B_t^{H,H'}:=\int_{\R^2}\left((t_1-u_1)_+^{H-1/2}-(-u_1)_+^{H-1/2}\right)\\
\times\left((t_2-u_2)_+^{H'-1/2}-(-u_2)_+^{H'-1/2}\right)dW_u,
\end{multline*}
$t\in[0,T]$, $H,H'\in[\mu,\nu]$, $i=1,2$,
where $W=\set{W_s,s\in\R^2}$ is a Wiener field.

\begin{lemma}\label{lem_fbs}
There exists a constant $L>0$ such that
for all $t\in[0,T]$, $H_i,H_i'\in[H_{\min},H_{\max}]$, $i=1,2,3,4$, the following inequality holds
\begin{multline}\label{fbs1}
\E\left(B_t^{H_1,H_1'}-B_t^{H_2,H_2'}+B_t^{H_3,H_3'}-B_t^{H_4,H_4'}\right)^2\\
\le L\left((H_1-H_2+H_3-H_4)^2
+(H_1'-H_2'+H_3'-H_4')^2\right.\\
+\left((H_1-H_2)^2+(H_1'-H_2')^2+(H_3-H_4)^2+(H_3'-H_4')^2\right)\\
\left.\times\left((H_1-H_4)^2+(H_1'-H_4')^2+(H_2-H_3)^2+(H_2'-H_3')^2\right)\right).
\end{multline}
\end{lemma}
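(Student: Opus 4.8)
The plan is to push the estimate into the Wiener isometry and then carry out a two-fold ``summation by parts'' adapted to the product structure of the kernel. Since $t\in[0,T]$ is fixed throughout, set $g_i:=f(t_1,\cdot,H_i)$ and $h_i:=f(t_2,\cdot,H_i')$ for $i=1,\dots,4$, with $f$ as in Lemma~\ref{lem_int}; by \eqref{int1}--\eqref{int3} (the estimates in the proof of Lemma~\ref{lem_int} are uniform in $h\in[H_{\min},H_{\max}]$) the functions $f(t_\ell,\cdot,h)$, $f'_h(t_\ell,\cdot,h)$, $f''_{hh}(t_\ell,\cdot,h)$, $\ell=1,2$, all lie in $L^2(\R)$ with $L^2$-norms bounded uniformly in $h$. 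Writing $g\otimes h$ for the function $(u_1,u_2)\mapsto g(u_1)h(u_2)$, the integrand defining $B_t^{H_i,H_i'}$ is exactly $g_i\otimes h_i\in L^2(\R^2)$, so by the isometry of the Wiener integral together with $\norm{A\otimes B}_{L^2(\R^2)}=\norm{A}_{L^2(\R)}\norm{B}_{L^2(\R)}$,
\[
\E\bigl(B_t^{H_1,H_1'}-B_t^{H_2,H_2'}+B_t^{H_3,H_3'}-B_t^{H_4,H_4'}\bigr)^2=\norm{S}_{L^2(\R^2)}^2,\qquad S:=\sum_{i=1}^4(-1)^{i+1}g_i\otimes h_i.
\]

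The next step is to decompose $S$. Grouping $S=(g_1\otimes h_1-g_2\otimes h_2)-(g_4\otimes h_4-g_3\otimes h_3)$, expanding each bracket via $g_a\otimes h_a-g_b\otimes h_b=(g_a-g_b)\otimes h_a+g_b\otimes(h_a-h_b)$, and regrouping (adding and subtracting $(g_4-g_3)\otimes h_1$ and $g_3\otimes(h_1-h_2)$), one arrives at
\[
S=D_g\otimes h_1+(g_4-g_3)\otimes(h_1-h_4)+(g_2-g_3)\otimes(h_1-h_2)+g_3\otimes D_h,
\]
where $D_g:=g_1-g_2+g_3-g_4$ and $D_h:=h_1-h_2+h_3-h_4$. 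The two middle terms are genuine products of single-parameter increments: from $g_a-g_b=\int_{H_b}^{H_a}f'_h(t_1,\cdot,h)\,dh$, Minkowski's integral inequality and \eqref{int2} one gets $\norm{g_4-g_3}_{L^2(\R)}\le C\abs{H_3-H_4}$, $\norm{g_2-g_3}_{L^2(\R)}\le C\abs{H_2-H_3}$, and likewise $\norm{h_1-h_4}_{L^2(\R)}\le C\abs{H_1'-H_4'}$, $\norm{h_1-h_2}_{L^2(\R)}\le C\abs{H_1'-H_2'}$. Hence the two middle terms contribute at most $C\bigl(\abs{H_3-H_4}\abs{H_1'-H_4'}+\abs{H_2-H_3}\abs{H_1'-H_2'}\bigr)$ to $\norm{S}$, whose square is dominated by the product term on the right-hand side of \eqref{fbs1}.

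It then remains to estimate $\norm{D_g}_{L^2(\R)}$ (and, symmetrically, $\norm{D_h}$). Put $D_H:=H_1-H_2+H_3-H_4$. Writing $g_1-g_2=(H_1-H_2)\int_0^1 f'_h(t_1,\cdot,\xi_{12}(\tau))\,d\tau$ and $g_4-g_3=(H_4-H_3)\int_0^1 f'_h(t_1,\cdot,\xi_{43}(\tau))\,d\tau$ with $\xi_{12}(\tau)=H_2+\tau(H_1-H_2)$, $\xi_{43}(\tau)=H_3+\tau(H_4-H_3)$, and using $H_4-H_3=(H_1-H_2)-D_H$, one obtains
\[
D_g=(H_1-H_2)\int_0^1\bigl(f'_h(t_1,\cdot,\xi_{12}(\tau))-f'_h(t_1,\cdot,\xi_{43}(\tau))\bigr)\,d\tau+D_H\int_0^1 f'_h(t_1,\cdot,\xi_{43}(\tau))\,d\tau.
\]
The last term has $L^2$-norm $\le C\abs{D_H}$ by \eqref{int2}. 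In the first term, $f'_h(t_1,\cdot,\xi_{12}(\tau))-f'_h(t_1,\cdot,\xi_{43}(\tau))=\int_{\xi_{43}(\tau)}^{\xi_{12}(\tau)}f''_{hh}(t_1,\cdot,h)\,dh$, so its $L^2$-norm is $\le\abs{\xi_{12}(\tau)-\xi_{43}(\tau)}\,\sup_h\norm{f''_{hh}(t_1,\cdot,h)}_{L^2(\R)}$ by \eqref{int3}, while $\abs{\xi_{12}(\tau)-\xi_{43}(\tau)}=\abs{(H_2-H_3)+\tau D_H}\le\abs{H_2-H_3}+\abs{D_H}$. Since $\abs{H_1-H_2}\le H_{\max}-H_{\min}$, this yields $\norm{D_g}_{L^2(\R)}\le C\bigl(\abs{H_1-H_2}\abs{H_2-H_3}+\abs{D_H}\bigr)$, and symmetrically $\norm{D_h}_{L^2(\R)}\le C\bigl(\abs{H_1'-H_2'}\abs{H_2'-H_3'}+\abs{H_1'-H_2'+H_3'-H_4'}\bigr)$. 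Using $\norm{h_1}_{L^2(\R)},\norm{g_3}_{L^2(\R)}\le C$ from \eqref{int1}, bounding $\norm{S}$ by the sum of the norms of its four pieces, squaring, and applying $(a+b+c+d)^2\le4(a^2+b^2+c^2+d^2)$, every resulting summand is dominated by a term on the right-hand side of \eqref{fbs1}, which proves the lemma.

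I expect the decomposition of $S$ to be the main point: it has to exhibit the cancellation so that no isolated $\abs{H_i-H_j}^2$ survives, and simultaneously channel the remaining second-order part into $D_g$ and $D_h$, whose estimation genuinely requires the second $h$-derivative of $f$ — which is exactly the role of \eqref{int3}. The remaining ingredients (uniform $L^2$-bounds over the compact $h$-interval and the Minkowski / fundamental-theorem-of-calculus manipulations for $L^2(\R)$-valued integrals) are routine.
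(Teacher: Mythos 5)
Your proof is correct, and it takes a genuinely different route from the paper's. The paper splits into four cases according to the signs of $(H_1-H_2)(H_3-H_4)$ and $(H_1'-H_2')(H_3'-H_4')$, introduces auxiliary interpolation points such as $\wt H_1=H_2'+H_4'-H_3'$ to force the one-parameter increments to be dominated by $\abs{H_1-H_2+H_3-H_4}$ or by products of increments, and applies the mean value theorem separately in each case (with the second derivative $f''_{hh}$ entering through the double-integral representation of the term $F_{32}$). You instead perform a single, case-free algebraic decomposition $S=D_g\otimes h_1+(g_4-g_3)\otimes(h_1-h_4)+(g_2-g_3)\otimes(h_1-h_2)+g_3\otimes D_h$ in $L^2(\R^2)$, after which the two middle terms immediately give product-type bounds, and the second derivative is used only to control $\norm{D_g}_{L^2}$ and $\norm{D_h}_{L^2}$ via $f'_h(\cdot,\xi_{12}(\tau))-f'_h(\cdot,\xi_{43}(\tau))=\int_{\xi_{43}(\tau)}^{\xi_{12}(\tau)}f''_{hh}\,dh$. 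I checked the decomposition and each of the four resulting bounds against the right-hand side of \eqref{fbs1}; they all match (e.g.\ $(H_1-H_2)^2(H_2-H_3)^2$ and $(H_2-H_3)^2(H_1'-H_2')^2$ are both dominated by the product factor). Both arguments ultimately rest on the same input, namely the uniform-in-$h$ $L^2$ bounds of Lemma~\ref{lem_int} on $f$, $f'_h$, $f''_{hh}$ combined with the factorization of the $L^2(\R^2)$-norm of tensor products; your version buys a shorter, sign-independent argument at the modest cost of working with $L^2(\R)$-valued integrals and Minkowski's integral inequality, while the paper's version stays entirely at the level of scalar mean-value computations.
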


\begin{proof}
Denote
\begin{align*}
f(t,u,h)&=(t-u)_+^{h-1/2}-(-u)_+^{h-1/2},\\
f_1(h)&=f(t_1,u_1,h),\quad
f_2(h)=f(t_2,u_2,h).
\end{align*}
Then
\begin{gather*}
\E\left(B_t^{H_1,H_1'}-B_t^{H_2,H_2'}+B_t^{H_3,H_3'}-B_t^{H_4,H_4'}\right)^2
=\int_{\R^2}F^2(t,u)\,du,\\
F(t,u)=f_1(H_1)f_2(H_1')-f_1(H_2)f_2(H_2')
    +f_1(H_3)f_2(H_3')-f_1(H_4)f_2(H_4').
\end{gather*}
Consider two cases.

\emph{Case 1:}
$(H_1-H_2)(H_3-H_4)\ge0$.

In this case
\begin{align*}
\abs{H_1-H_2}&\le\abs{H_1-H_2+H_3-H_4},\\
\abs{H_3-H_4}&\le\abs{H_1-H_2+H_3-H_4}.
\end{align*}
We have
\begin{align*}
F(t,u)&=F_1(t,u)+F_2(t,u)+F_3(t,u),
\intertext{where}
F_1(t,u)&=(f_1(H_1)-f_1(H_2))f_2(H_2'),\\
F_2(t,u)&=(f_1(H_3)-f_1(H_4))f_2(H_3'),\\
F_3(t,u)&=f_1(H_1)(f_2(H_1')-f_2(H_2'))+f_1(H_4)(f_2(H_3')-f_2(H_4')).
\end{align*}
By the mean value theorem, there exist
$h_1\in[H_1\wedge H_2,H_1\vee H_2]$
and $h_2\in[H_3\wedge H_4,H_3\vee H_4]$ such that
\begin{align*}
f_1(H_1)-f_1(H_2)&=f_1'(h_1)(H_1-H_2),\\
f_1(H_3)-f_1(H_4)&=f_1'(h_2)(H_3-H_4).
\end{align*}
Applying Lemma~\ref{lem_int}, we get
\begin{align*}
\int_{\R^2}F_1^2(t,u)du
&\le(H_1-H_2)^2\int_{\R^2}(f_1'(h_1))^2\,du\int_{\R^2}(f_2(H_2'))^2\,du,\\
&\le C_1(H_1-H_2+H_3-H_4)^2.
\end{align*}
In much the same way, we have
\[
\int_{\R^2}F_2^2(t,u)du
\le C_2(H_1-H_2+H_3-H_4)^2.
\]
Hence,
\begin{align*}
\int_{\R^2}F^2(t,u)\,du
&\le3\left(\int_{\R^2}F_1^2(t,u)\,du+\int_{\R^2}F_2^2(t,u)\,du+\int_{\R^2}F_3^2(t,u)\,du\right)\\
&\le C_3(H_1-H_2+H_3-H_4)^2+3\int_{\R^2}F_3^2(t,u)\,du. \end{align*}
Consider the latter term.
There are two possible cases.

\emph{Case 1a:}
$(H_1'-H_2')(H_3'-H_4')\ge0$.

In this case
\begin{align*}
\abs{H_1'-H_2'}&\le\abs{H_1'-H_2'+H_3'-H_4'},\\
\abs{H_3'-H_4'}&\le\abs{H_1'-H_2'+H_3'-H_4'}.
\end{align*}
By the mean value theorem, there exist
$h_3\in[H_1'\wedge H_2',H_1'\vee H_2']$
and $h_4\in[H_3'\wedge H_4',H_3'\vee H_4']$
such that
\begin{align*}
f_2(H_1')-f_2(H_2')&=f_2'(h_3)(H_1'-H_2'),\\
f_2(H_3')-f_2(H_4')&=f_2'(h_4)(H_3'-H_4').
\end{align*}
Using Lemma~\ref{lem_int}, we estimate
\begin{align*}
\int_{\R^2}F_3^2(t,u)du
&\le(H_1'-H_2')^2\int_{\R^2}(f_1(H_1))^2\,du\int_{\R^2}(f_2'(h_3))^2\,du\\
&\quad+(H_3'-H_4')^2\int_{\R^2}(f_1(H_4))^2\,du\int_{\R^2}(f_2'(h_4))^2\,du\\
&\le C_4(H_1'-H_2'+H_3'-H_4')^2.
\end{align*}

\emph{Case 1b:}
$(H_1'-H_2')(H_3'-H_4')<0$.

Without loss of generality, we assume that
$\abs{H_1'-H_2'}>\abs{H_3'-H_4'}$ and $H_1'<H_2'$
(hence, $H_4'<H_3'$).

Put
$\wt H_1 = H_2'+H_4'-H_3'$.
It is not hard to see that
$\wt H_1\in[H_1',H_2']$.

\begin{align*}
F_3(t,u)&=F_{31}(t,u)+F_{32}(t,u),
\intertext{where}
F_{31}(t,u)&=f_1(H_1)\left(f_2(H_1')-f_2\left(\wt H_1\right)\right),\\
F_{32}(t,u)&=f_1(H_1)\left(f_2\left(\wt H_1\right)-f_2(H_2')\right)+f_1(H_4)(f_2(H_3')-f_2(H_4')).
\end{align*}

By the mean value theorem, there exists $h_5\in\left[H_1',\wt H_1\right]$ such that
\[
F_{31}(t,u)=f_1(H_1)f_2'(h_5)\left(H_1'-\wt H_1\right)
=f_1(H_1)f_2'(h_5)(H_1'-H_2'+H_3'-H_4').
\]
By Lemma~\ref{lem_int},
\begin{align*}
\int_{\R^2}F_{31}^2(t,u)du
&\le(H_1'-H_2'+H_3'-H_4')^2\int_{\R^2}(f_1(H_1))^2\,du\int_{\R^2}(f_2'(h_5))^2\,du\\
&\le C_5(H_1'-H_2'+H_3'-H_4')^2.
\end{align*}

We estimate $F_{32}(t,u)$.
\begin{align*}
F_{32}(t,u)&=-\int_0^{H_3'-H_4'}f_1(H_1)f_2'\left(\wt H_1+x\right)\,dx\\
&\quad+\int_0^{H_3'-H_4'}f_1(H_4)f_2'(H_4'+x)\,dx\\
&\le\int_0^{H_3'-H_4'}\int_{H_1}^{H_4}f_1'(y)f_2'\left(\wt H_1+x\right)\,dy\,dx\\
&\quad+\int_0^{H_3'-H_4'}\int_{\wt H_1}^{H_4'}
    f_1'(H_4)f_2''(x+y)\,dy\,dx.
\end{align*}
By the mean value theorem, there exist
$x_1,x_2\in[0,H_3'-H_4']$,
$y_1\in[H_1\wedge H_4,H_1\vee H_4]$,
$y_2\in\left[\wt H_1\wedge H_4',\wt H_1\vee H_4'\right]$
such that
\begin{align*}
\int_0^{H_3'-H_4'}&\int_{H_1}^{H_4}f_1'(y)f_2'\left(\wt H_1+x\right)\,dy\,dx\\
&=(H_1-H_4)(H_3'-H_4')f_1'(y_1)f_2'\left(\wt H_1+x_1\right),\\
\int_0^{H_3'-H_4'}&\int_{\wt H_1}^{H_4'}
    f_1'(H_4)f_2''(x+y)\,dy\,dx\\
&=\left(H_4'-\wt H_1\right)(H_3'-H_4')f_1'(H_4)f_2''(x_2+y_2)\\
&=(H_3'-H_2')(H_3'-H_4')f_1'(H_4)f_2''(x_2+y_2).
\end{align*}
Using Lemma~\ref{lem_int}, we get
\[
\int_{\R^2}F_{32}^2(t,u)du
\le C_6(H_3'-H_4')^2(H_1-H_4)^2+C_7(H_3'-H_4')^2(H_3'-H_2')^2.
\]
Thus, in the case~1b \eqref{fbs1} holds.

\emph{Case 2:}
$(H_1-H_2)(H_3-H_4)<0$.

Without loss of generality, we assume that
$\abs{H_1-H_2}>\abs{H_3-H_4}$ and $H_1<H_2$
(hence, $H_4<H_3$).

Put
$\wh H_1 = H_2+H_4-H_3$.
It is not hard to see that
$\wh H_1\in[H_1,H_2]$.

We have
\begin{align*}
F(t,u)&=G_1(t,u)+G_2(t,u)+G_3(t,u),
\intertext{where}
G_1(t,u)&=(f_1(H_1)-f_1(\wh H_1))f_2(H_1'),\\
G_2(t,u)&=(f_1(\wh H_1)-f_1(H_2))f_2(H_1')
    +(f_1(H_3)-f_1(H_4))f_2(H_4'),\\
G_3(t,u)&=f_1(H_2)(f_2(H_1')-f_2(H_2'))+f_1(H_3)(f_2(H_3')-f_2(H_4')).
\end{align*}

Terms $G_1(t,u)$ and $G_2(t,u)$ are estimated similarly to
$F_{31}(t,u)$ and $F_{32}(t,u)$ in Case 1b.
We have
\begin{align*}
\int_{\R^2}G_1^2(t,u)du
&\le C_8(H_1-H_2+H_3-H_4)^2,\\
\int_{\R^2}G_2^2(t,u)du
&\le C_9(H_3-H_4)^2(H_1'-H_4')^2+C_{10}(H_3-H_4)^2(H_3-H_2)^2.
\end{align*}
It remains to estimate $G_3(t,u)$.
We consider two cases.

\emph{Case 2a:}
$(H_1'-H_2')(H_3'-H_4')\ge0$.

$G_3(t,u)$ can be estimated similarly to $F_3(t,u)$ in Case~1a.
We get
\[
\int_{\R^2}G_3^2(t,u)du
\le C_{11}(H_1'-H_2'+H_3'-H_4')^2.
\]

\emph{Case 2b:}
$(H_1'-H_2')(H_3'-H_4')<0$.

This case can be considered in a similar way to Case~1b.
Without loss of generality, we assume that
$\abs{H_1'-H_2'}>\abs{H_3'-H_4'}$ and $H_1'<H_2'$.
Then we obtain
\begin{multline*}
\int_{\R^2}G_3^2(t,u)du
\le C_{12}(H_1'-H_2'+H_3'-H_4')^2\\
+C_{13}(H_3'-H_4')^2(H_3-H_2)^2+C_{14}(H_3'-H_4')^2(H_3'-H_2')^2.
\end{multline*}

Thus, now the proof is complete.
\end{proof}

\end{document}